\else\usepackage{stmaryrd}\fi
\def\mcWidth#1{\csname TY@F#1\endcsname+\tabcolsep}
\def\cAlignHack{\rightskip\@flushglue\leftskip\@flushglue\parindent\z@\parfillskip\z@skip}
\def\rAlignHack{\rightskip\z@skip\leftskip\@flushglue \parindent\z@\parfillskip\z@skip}
\if@twocolumn\usepackage{dblfloatfix}\fi
\def\eqalign#1{\null\vcenter{\def\\{\cr}\openup\jot\m@th
  \ialign{\strut$\displaystyle{##}$\hfil&$\displaystyle{{}##}$\hfil
      \crcr#1\crcr}}\,}
\renewcommand\efloat@iwrite[1]{\immediate\expandafter\protected@write\csname efloat@post#1\endcsname{}}}{}%
\let\lt=<
\let\gt=>
\def\processVert{\ifmmode|\else\textbar\fi}
\def\subparagraph{\@startsection{paragraph}{5}{2\parindent}{0ex plus 0.1ex minus 0.1ex}%
{0ex}{\normalfont\small\itshape}}%
\newcommand\role[1]{\unskip}
\newcommand\aucollab[1]{\unskip}
\def\checkGraphicsWidth{\ifdim\Gin@nat@width>\linewidth
	\tsGraphicsScaleX\linewidth\else\Gin@nat@width\fi}
\def\checkGraphicsHeight{\ifdim\Gin@nat@height>.9\textheight
	\tsGraphicsScaleY\textheight\else\Gin@nat@height\fi}
\def\fixFloatSize#1{}
\let\ts@includegraphics\includegraphics
\def\inlinegraphic[#1]#2{{\edef\@tempa{#1}\edef\baseline@shift{\ifx\@tempa\@empty0\else#1\fi}\edef\tempZ{\the\numexpr(\numexpr(\baseline@shift*\f@size/100))}\protect\raisebox{\tempZ pt}{\ts@includegraphics{#2}}}}
\def\URL#1#2{\@ifundefined{href}{#2}{\href{#1}{#2}}}
\def\UrlOrds{\do\*\do\-\do\~\do\'\do\"\do\-}%
\g@addto@macro{\UrlBreaks}{\UrlOrds}
\newtheorem{theorem}{Theorem}[section]
\newtheorem{lemma}[theorem]{Lemma}
\title{A GREEDY ALGORITHM FOR SPARSE PRECISION MATRIX APPROXIMATION}
\author{
  Didi Lv \\
  School of Mathematical Sciences\\
  Shanghai Jiao Tong University\\
  Shanghai, China, 200240 \\
  \texttt{Eric2014\_Lv@sjtu.edu.cn} \\
   \And
 Xiaoqun Zhang \\
  School of Mathematical Sciences \\ Institute of Natural Sciences and MOE-LSC\\
  Shanghai Jiao Tong University\\
  Shanghai, China, 200240 \\
  \texttt{xqzhang@sjtu.edu.cn} \\
}
\begin{document}%
\maketitle

\begin{abstract}
Precision matrix estimation is an important problem in statistical data analysis.  This paper introduces a fast sparse precision matrix estimation algorithm, namely GISS$^{{\rho}}$, which is originally introduced for compressive sensing. The algorithm GISS$^{{\rho}}$ is derived based on $l_1$ minimization while with the computation advantage of greedy algorithms. We analyze the asymptotic convergence rate of the proposed GISS$^{{\rho}}$  for sparse precision matrix estimation and sparsity recovery properties with respect to the stopping criteria. Finally, we numerically compare GISS$^{\rho}$ to other sparse recovery algorithms, such as  ADMM and  HTP in three settings of precision matrix estimation. The numerical results show the advantages of the proposed algorithm.
\end{abstract}

\keywords{Precision matrix estimation, CLIME estimator, sparse recovery, inverse scale space method, greedy methods.}

\section{Introduction}
Covariance matrix and precision matrix estimation are two important problems in statistics analysis and data science. The problems become more challenging in high-dimensional setting where the number of variable dimension $p$ is larger than the sample size $n$, hence the need for a fast, accurate and stable precision/covariance matrix estimation is  necessary. In the high-dimensional setting, classical methods and theoretical  results with fixed $p$ and large $n$ are no longer applicable.  Another huge challenge due to  high dimension is  high computational cost. Therefore, effective model and method are urgent facing high-dimensional data challenge. \par

Denote $\bm{X}=(X_1,X_2,\cdots,X_p)^T$ by a $p$ variate random vector. The covariance matrix and precision matrix can be traditionally denoted by $\Sigma_0$ and $\Omega_0=\Sigma^{-1}_0$ respectively. Assume an independent and identically distributed $n$ random samples $\{\bm{X}_1,\bm{X}_2, \cdots,\bm{X}_n\}$ are from the distribution of $\bm{X}$. The sample covariance matrix is the common method among the estimators of covariance matrix, which is  defined as follows,
\begin{eqnarray*}
  \Sigma_n=\frac{1}{n-1}\sum_{k=1}^n(\bm{X}_k-\bar{\bm{X}})(\bm{X}_k-\bar{\bm{X}})^T
\end{eqnarray*}
where $\bar{\bm{X}}=\frac{1}{n}\sum_{k=1}^n\bm{X}_k$ denotes the sample mean. When  $p$ is larger than $n$, it is obvious that $\Sigma_n$ is singular and the estimation for $\Omega_0$ naturally becomes unstable and imprecise. \par

Estimation of precision matrix in high-dimensional setting has been studied for a long time. For example, when the random variable  $\bm{X}$ follows a certain ordering structure, methods based on banding the Cholesky factor of the inverse {of sample covariance matrix} were studied in \cite{wu2003nonparametric,bickel2008covariance}. Penalized likelihood methods such as $l_1$-MLE type estimators were studied in \cite{friedman2008sparse,d2008first,yuan2007model} and the convergence rate in Frobenius norm was given by \cite{rothman2008sparse}. In \cite{yuan2010high}, the authors established the convergence rate for sub-Gaussian distribution cases. For more restrictive conditions, such as mutual incoherence or irrepresentable conditions, \cite{ravikumar2011high} showed the convergence rates in elementwise $l_\infty$ norm and spectral norm. To overcome the drawbacks that $l_1$ penalty  inevitably leads to biased estimation, nonconvex penalty such as SCAD penalty \cite{lam2009sparsistency,fan2009network} was proposed {\cite{fan2001variable,zou2006adaptive}}, although it often requires high computational cost. \par

Recently, \cite{CLIME} proposed a new constrained $l_1$ minimization approach called CLIME for sparse precision estimation. Convergence rates in spectral norm,  elementwise $l_\infty$ norm and Frobenius norm were established under weaker assumptions and shown to be faster than those $l_1$-MLE estimators when the population distributions have polynomial-type tails. In addition, CLIME provides an effective computational performance as the columns of precision matrix estimation can be independently obtained and accelerated by parallelized computing. {However, in \cite{CLIME}, each column is obtained by a solving a linear programming, which could be still time-consuming for high dimension}. More efficient approach are still needed to be developed for practical high dimensional applications.\par

In compressive sensing and sparse optimization community, many algorithms and related theoretical results are developed  for  $l_1$ minimization optimization problems \cite{beck2009fast,caiconvergence,cai2009linearized,hale2008fixed,osher2011fast,yin2008bregman,zhang2011unified}. Greedy inverse scale space flows (GISS) \cite{Mller2016FastSR}, originally stems from the adaptive inverse scale (aISS) method \cite{burger2013adaptive}, is a new sparse recovery approach combining the idea of greedy approach and $\ell_1$ minimization. The advantage of GISS method was efficiency compared to the aISS method. GISS$^{\rho}$ with $\rho$ being an acceleration factor, as a variant of GISS, can further accelerate sparse solution recovery by increasing  the support of the current iterate by many indices at once. 
\par

In this article, we take the advantages of CLIME estimator framework and GISS$^{\rho}$ algorithm and propose a new efficient approach for sparse  precision matrix estimation. More specifically, we transfer the the constraint bound in CLIME estimator  to a turning parameter in the stop criteria in GISS$^{\rho}$ method. Convergence result in  elementwise $l_\infty$ norm is established under weaker assumptions same as \cite{CLIME}. Moreover, under the Gaussian noise setting, the stopping time based on the data term is also obtained. The numerical experiments shows the competitive advantage of computation time, {on obtaining the same level of sparsity and accuracy compared to other existing methods.}

The rest of the paper is organized as follows. In Section \ref{sec:pre}, we firstly introduce basic notations  and simply revisit the CLIME estimator. In Section \ref{sec:alg}, we present the basic idea of our new method derived from CLIME estimator and GISS$^{\rho}$ algorithm. In Section \ref{sec:theo}, we establish the theoretical analysis with assumptions. Section \ref{sec:ne} presents the numerical results including simulated experiments and application on real data. We provide the discussion and conclusion in Section \ref{sec:dc} and the proof of the main results can be found in Appendix.
\section{Preliminary}
\label{sec:pre}
\subsection{Notations and definitions}
Before presenting our proposed precision matrix estimator, we first  introduce some essential notations and definitions. \par
For a vector $\bm{a}=(a_1, a_2, \ldots, a_n)^T\in \mathbb{R}^{n}$, we define $|\bm{a}|_1 = \sum_{j=1}^n|a_j|$, $|\bm{a}|_2 = \sqrt{\sum_{j=1}^n a_j^2}$ and $|\bm{a}|_\infty=\mathrm{max}_{1\leq j\leq n}~|a_j|$ to denote the vector norms. Define the two types of inner products between two vectors by $\langle \bm{a},\bm{b}\rangle = \bm{a}^T\bm{b}$ and $\langle\bm{a},\bm{b}\rangle_n=\frac{1}{n}\bm{a}^T\bm{b}$, and denote $\|\bm{\bm{a}}\|_n=\frac{1}{\sqrt{n}}|\bm{a}|_2$. \par

For a matrix $\bm{A} = (a_{ij})\in \mathbb{R}^{p\times p}$, we define the elementwise $l_\infty$ norm $|\bm{A}|_\infty = \mathrm{max}_{1\leq i\leq p, 1\leq j\leq p}|a_{ij}|$, the spectral norm $\|\bm{A}\|_2=\mathrm{sup}_{|\bm{x}|_2\leq1}|\bm{Ax}|_2$, the $L_1$ norm $\|\bm{A}\|_{L_1}=\mathrm{max}_{1\leq j\leq p}\sum_{i=1}^p|a_{ij}|$, the infinity norm $\|\bm{A}\|_\infty=\mathrm{max}_{1\leq i\leq p}\sum_{j=1}^p|a_{ij}|$, the Frobenius norm $\|\bm{A}\|_F=\sqrt{\sum_{i=1}^p\sum_{j=1}^p a_{ij}^2}$, and the elementwise $L_1$ norm $\|\bm{A}\|_1 = \sum_{i=1}^p\sum_{j=1}^p|a_{ij}|$. \par

The $i$th row and $j$th column of the matrix $\bm{A}$ are denoted by $\bm{A}_{i,\cdot}$ and $\bm{A}_{\cdot, j}$ respectively and the transpose of $\bm{A}$ is denoted by $\bm{A}^T$. Let $\bm{A}^*=\frac{1}{p}A^T$ denote the adjoint operator of $\bm{A}$ with respect to the inner product $\langle\cdot,\cdot\rangle_p$. The inner product between two matrix $\bm{A}$ and $\bm{B}$ is denoted by $\langle \bm{A},\bm{B}\rangle=\sum_i(\bm{A}^T\bm{B})_{ii}$ with proper size. For two index sets $T$ and $T'$, we use $\bm{A}_{TT'}$ to denote the $|T|\times|T'|$ matrix with rows and columns of $\bm{A}$ indexed by $T$ and $T'$ respectively and $\bm{A}\succ 0$ means that $\bm{A}$ is positive definite. It also should be noted the $p\times p$ identity matrix is defined by $I$ while $\mathbb{I}$ denoted the indicator function. \par

For two real sequences $\{\epsilon_n\}$ and $\{\eta_n\}$, we write $\epsilon_n=O(\eta_n)$ if there exists a constant $C$ such that $|\epsilon_n|\leq C|\eta_n|$ for large $n$, $\epsilon_n=o(\eta_n)$ if $\mathrm{lim}_{n\rightarrow\infty}{\epsilon_n}/{\eta_n}=0$.
\subsection{CLIME estimator}
The CLIME estimator in~\cite{CLIME} proposed to obtain a precision matrix estimation  via solving the following $l_1$ minimization:
\begin{equation}\label{model1}
  \begin{aligned}
    &\text{min}
    &&\|{\Omega}\|_1 \\
    &\text{s.t}
    &&|\Sigma_n{\Omega} - I|_\infty \leq\lambda_n, \; {\Omega}\in \mathbb{R}^{p\times p}
  \end{aligned}
\end{equation}
where $\Sigma_n\in\mathbb{R}^{p\times p}$ is the sample covariance matrix generated by $n$ data samples, $\Omega$ is  the precision matrix to be estimated and the turning parameter $\lambda_n$ is set for controlling the approximation error under the elementwise $l_\infty$ norm.\par

In general, the solution $\hat{\Omega}_1=(\omega_{ij})$ of (\ref{model1}) is asymmetric, hence a symmetry strategy  as followed was adopted in \cite{CLIME},
\begin{equation}\label{sym}
\begin{aligned}
  &\hat{\Omega} = (\hat{\omega}_{ij}), \;\text{where} \\
  &\hat{\omega}_{ij} = \hat{\omega}_{ji} = {\omega}_{ij}~\mathbb{I}(|\omega_{ij}|\leq|\omega_{ji}|) + {\omega}_{ji}~\mathbb{I}(|\omega_{ij}|>|\omega_{ji}|) \; \text{for} \;i,j=1,2,\cdots, p.
\end{aligned}
\end{equation}
The above defined $\hat{\Omega}$ is the final estimated precision matrix through CLIME estimator. \par

It is easy to see that the convex program (\ref{model1}) can be decomposed into the following $p$ vector convex minimization problems:
\begin{equation}\label{model2}
  \begin{aligned}
    & \text{min}
    && |\bm{\beta_i}|_1\\
    & \text{s.t.}
    && |\Sigma_n\bm{\beta_i} - \bm{e_i}|_\infty \leq\lambda_n, \; \beta_i\in \mathbb{R}^{p},
  \end{aligned}
\end{equation}
for all $1\leq i\leq p$, where $\bm{e_i}$ is a standard unit vector in $\mathbb{R}^{p}$. Denote $\hat{\bm{\beta_i}}$  as the solution of (\ref{model2}), then the assembling  of the $p$  vectors in the form of $[\hat{\bm{\beta}}_1,\hat{\bm{\beta}}_2,\cdots,\hat{\bm{\beta}}_p]$ is a solution of (\ref{model1}), and vice visa.

For the case that the covariance matrix $\Sigma_n$ is not invertible, we can also consider a regularized
\begin{align}
\Sigma_{n,\gamma}&=\Sigma_n+\gamma I
\label{eq:sigma_regu}
\end{align} for $\gamma>0$. Our GISS$^\rho$ estimator can be obtained via solving the following minimization problem:
\begin{equation} \label{model3}
\begin{aligned}
    &\text{min}
    &&|\bm{\beta_i}|_1\\
    &\text{s.t.}
    &&\Sigma_{n,\gamma}\bm{\beta_i} = \bm{e_i}, \; \bm{\beta_i}\in \mathbb{R}^{p}
\end{aligned}
\end{equation}
for all $1\leq i \leq p$ with the $l_\infty$ norm stopping criterion $i.e.$ $|\Sigma_{n,\gamma}\bm{\beta_i} -\bm{e_i}|_\infty\leq\lambda_n$. \par%
\section{Sparse precision matrix estimation by GISS$^{\rho}$}
\label{sec:alg}
Before presenting the GISS$^\rho$ estimator for sparse precision matrix, we first  introduce the basic ideas of GISS and GISS$^\rho$. It stems from the adaptive inverse scale (aISS) method  \cite{burger2013adaptive}, which was firstly proposed for solving compressive sensing problem. In the following, we present the algorithms in the context of  precision matrix estimation for the consistency of notations.

\subsection{Inverse scale space (ISS) based algorithms}

 The aISS algorithm was proposed to  solve the equality constrained $l_1$ minimization (basic pursuit) problem. In the context of precision matrix estimation, we consider
\begin{equation}\label{aISS_prob}
\begin{aligned}
  & \text{min}
  && |\bm{\beta}|_1\\
  & \text{s.t.}
  && \Sigma\bm{\beta}=\bm{e}
\end{aligned}
\end{equation}
where $\Sigma\in \mathbb{R}^{p\times p}$ denotes the covariance matrix, $\bm{\beta}\in \mathbb{R}^{p}$ represents one column of the sparse precision matrix to be reconstructed and $\bm{e}\in \mathbb{R}^{p}$ denotes one column of identity matrix $I$. \par
For the  $l_1$ minimization optimization problem for a general matrix $\Sigma$ and vector $\bm{e}$, Bregman iteration (BI) and augmented Lagrangian (AL) method are two efficient equivalent iterative methods~\cite{osher2005iterative,esser2009applications,yin2008bregman}.
By applying  BI  to solve (\ref{aISS_prob}), we obtain the following iterative sequence:
\begin{subequations}\label{AL_seq_2}
\begin{align}
&\bm{\beta}^{k+1} = \mathrm{arg\ min}\ |\bm{\beta}|_1 + \frac{\lambda}{2} |\Sigma\bm{\beta}-\bm{e}|_2^2-\langle \bm{p}^k,\bm{\beta}\rangle\label{primal},\\
&\bm{p}^{k+1} = \bm{p}^k + \lambda \Sigma^T(\bm{e}-\Sigma \bm{\beta}^{k+1}),\label{dual}
\end{align}
\end{subequations}
where $k$ is the iteration step, $\bm{p}$ is the dual variable introduced in the optimization problem and $\lambda>0$ is step size for dual variable update. \par
In the above iterative scheme, the optimal condition of (\ref{primal}) yields $\bm{p}^{k+1}\in\partial|\bm{\beta}^{k+1}|_1$ on combining with Equation (\ref{dual}). Equation (\ref{dual}) can be reformulated as
\begin{eqnarray}\label{frac}
\frac{\bm{p}^{k+1} - \bm{p}^k}{\lambda} = \Sigma^T(\bm{e}-\Sigma\bm{\beta}^{k+1}).
\end{eqnarray}

If we consider both primal variable $\bm{\beta}$ and dual variable $\bm{p}$ as functions of time $t$ and interpret $\lambda$ as the time step, BI can be transformed into a dynamic equation:
\begin{equation}\label{evo_eq1}
\begin{aligned}
&\partial_t\bm{p}(t)=\Sigma^T(\bm{e}-\Sigma\bm{\beta}(t)) \\
\text{ s.t }\;& \bm{p}(t)\in \partial |\beta(t)|_1.
\end{aligned}
\end{equation}

The above differential inclusion is called inverse~scale\ space\ flow (ISS)~\cite{burger2006nonlinear}. \par
The main idea of aISS \cite{burger2013adaptive} is that  the support of the solution $\bm{\beta}(t^k)$ can be determined by the subgradient $\bm{p}(t^{k})$ at some time $t^k$ based on the following relation
\begin{eqnarray}\label{evo_eq}
\bm{p}\in\partial|\bm{\beta}|_1 \Leftrightarrow
\left
\{\begin{array}{lll}
\bm{p}_i = \mathrm{sign}(\bm{\beta}_i),                       &\mathrm{if}\ \bm{\beta}_i\neq0, \\
  |\bm{p}_i|\leq1,                          &\mathrm{otherwise}.
\end{array}
\right.
\end{eqnarray}
for all $1\leq i\leq p$, i.e. the support of $\bm{\beta}(t^k)$ is restricted onto the set $I=\{i\mid|\bm{p}_i(t^k)|=1\}$. Furthermore, the step size $t^k$ can be determined as the change of the signal is piecewise linear, and  the signal was obtained  via solving a low dimensional optimization problem under the constraint of the sign consistence between $\bm{p}_i(t^k)$ and $\bm{\beta}_i(t^k)$.

\subsection{GISS$^{\rho}$ estimator}

Motivated by the similarities between greedy algorithms (e.g. OMP~\cite{pati1993orthogonal,mallat1993matching,tropp2007signal}, WOMP~\cite{tropp2004greed,foucart2012stability}, CoSaMP~\cite{needell2009cosamp} and HTP~\cite{foucart2011hard}) and $l_1$ minimization algorithms (e.g. aISS), the authors in \cite{Mller2016FastSR} proposed GISS method which approximates aISS flow closely. By removing the sign consistence constraint in aISS, GISS method achieves a more efficient scheme and provide a posterior method to estimate the distance to the $l_1$ minimizer. GISS method not only inherits remarkable properties from aISS, such as finite steps convergence but also shows much faster computation speed \cite{Mller2016FastSR}. The modified version,  namely GISS$^{\rho}$, can  further accelerate the computation speed  by including larger set of index with a factor $\rho\geq1$.

Because of the efficiency of GISS$^\rho$  algorithm for sparse recovery, we propose to apply it to estimate the  columns of precision matrix, based on the formulation (\ref{model1}). More specifically, we propose GISS$^{\rho}$ estimator by applying GISS$^{\rho}$ algorithm via solving the following minimization problem:
\begin{equation} \label{model3}
\begin{aligned}
    &\text{min}
    &&|\bm{\beta_i}|_1\\
    &\text{s.t.}
    &&\Sigma_n\bm{\beta_i} = \bm{e_i}, \; \bm{\beta_i}\in \mathbb{R}^{p}
\end{aligned}
\end{equation}
for all $1\leq i \leq p$ with the $l_\infty$ norm stopping criterion $i.e.$ $|\Sigma_n\bm{\beta_i} -\bm{e_i}|_\infty\leq\lambda_n$.  The detail for solving each subproblem  is shown in \textbf{Algorithm \ref{GISSrho}}.
\begin{algorithm}\label{al1}
\caption{GISS$^{\rho}$ method.}
\label{GISSrho}
\textbf{Parameters:} $\Sigma_n,\ \bm{e_i},~\rho\geq1,~\lambda_n\geq 0$ \\
\textbf{Initialization:} $\bm{r_0} = \bm{e_i}, t_1 = 1/|\Sigma_n^T\bm{r_0}|_\infty,\ \bm{p}(t_1)=t_1\Sigma_n^T\bm{r_0}$
\begin{algorithmic}
\While{$|\bm{r_k}|_\infty>\lambda_n$} \\
1. Compute $I_k=\{i\mid |\tilde{\bm{p}}_i(t_k)|=1\}$ \\
2. Compute $\bm{\beta}(t_k)=\mathrm{arg\ min}_{\bm{\beta}}\{|\Sigma_n{P_{I_k}}\bm{\beta}-\bm{e}|_2^2\}$ \\
3. Compute the residual $\bm{r_k}=\bm{e}-\Sigma_n\bm{\beta}(t^k)$ \\
4. Obtain $t_{k+1}$ as \par
(3.1)\ \ \  $t_{k+1}$ = $\rho$min\{$t\ |\ t>t_k,\exists j:|\bm{p}_j(t)|=1,\ \bm{\beta}_j(t_k)=0,\ \bm{p}_j(t)\neq \bm{p}_j(t_k)$\}, \\
where \par
(3.2)\ \ \  $ \bm{p}_j(t)=\bm{p}_j(t_k)+(t-t_k)(\Sigma_n^T\bm{r_k})_j$\\
5. Update the dual variable $\bm{p}(t)$ via (3.2) with $t=t_{k+1}$ and \par
$\tilde{\bm{p}}(t^{k+1})= $sign$(\bm{p}(t^{k+1}))$min$(|\bm{p}(t^{k+1})|,1)$
\EndWhile
\end{algorithmic}
\textbf{return} $\bm{\beta}(t_k)$ as $\bm{\beta_i}$.
\end{algorithm} \par
We note that as CLIME estimator, the estimation of each column can be performed in parallel. Also the difference between GISS$^{\rho}$ estimator and CLIME estimator is the role of $\lambda_n$. More precisely, $\lambda_n$ in CLIME estimator is used as the model tolerance. By controlling the tuning parameter, CLIME estimator can explore broader region of the solution space and the optimization problem is solved by a primal dual interior point algorithm applied on the equivalent linear programming problem. For  GISS$^{\rho}$ estimator,  $\lambda_n$ plays the role of the stopping criterion and the algorithm produces a sequence of estimators. As expected, from our observation, both in the theoretical analysis and simulation experiments, $\lambda_n$ shows similar results in GISS$^{\rho}$ estimator comparing to CLIME estimator, while  GISS$^{\rho}$ estimator is more efficient, especially for large scale problem. The detailed numerical performance will be shown in Section \ref{sec:ne}.
\section{Theoretical Analysis}
\label{sec:theo}
\subsection{Asymptotic analysis}
In this section, we will present the asymptotic results of GISS$^{\rho}$ estimator under standard moments conditions of the random variables. We denote the sample covariance matrix as $\textbf{$\Sigma_n$}=(\hat{\sigma}_{ij})=(\bm{\hat{\sigma}}_1,\bm{\hat{\sigma}}_2,\ldots,\bm{\hat{\sigma}}_p)$, where $\bm\hat{\sigma}_i,~i=1,\cdots, p$ are column vectors.
The true covariance matrix as $\textbf{$\Sigma_0$}=(\sigma_{ij}^0)$ and the expectation of a random variable (r.v.) $\bm{X}$ as E$\bm{X}=(\mu_1,\ldots,\mu_p)^T$. The following two  cases are usually considered according to the moment conditions on $\bm{X}$. \\

\noindent(C1) \textbf{Exponential-type tails}: Suppose that there exist some $0\textless\eta\textless1/4$ such that $\mathrm{log}p/n\leq\eta$ and
\begin{equation*}
  \mathrm{E}e^{t(X_i-\mu_i)^2}\leq K\textless\infty \quad \mathrm{for\ all\ } |t|\leq\eta,\ \mathrm{for\ all\ } i,
\end{equation*}
where $K$ is a upper bounded constant. \\
\noindent(C2) \textbf{Polynomial-type tails}: Suppose that for some $\tilde{\gamma},\ c_1\textgreater0,\ p\leq c_1 n^{\tilde{\gamma}},$ and for some $\delta\textgreater0$,
\begin{equation*}
  \mathrm{E}|X_i-\mu_i|^{4\tilde{\gamma}+4+\delta}\leq K\quad \mathrm{for\ all\ } i.
\end{equation*}\par

The following quantity is to measure the relation with the variance $\hat{\sigma}_{ij}$ which is useful in the theoretical property analysis,
\begin{equation}
  \kappa = \mathrm{max}_{ij}~\mathrm{E}[(X_i-\mu_i)(X_j-\mu_j)-\sigma_{ij}^0]^2=:\mathrm{max}_{ij}~\kappa_{ij}.
\end{equation}
The maximum value $\kappa$ captures the overall variability of $\Sigma_n$. With the condition (C1) and (C2), $\kappa$ is a bounded constant depending on $\tilde{\gamma},\delta,K$~\cite{CLIME}. \par
Through the following convergence analysis, we consider the uniformity class of matrices  to estimate the precision matrix $\Omega_0$,
\begin{equation*}
\begin{aligned}
  U:=U(q,s_0(p))=\{\Omega:\Omega\succ0,\; \|\Omega\|_{L_1}\leq M,\; \mathrm{max}_{1\leq i\leq p}\sum_{j=1}^p|\omega_{ij}|^q\leq s_0(p)  \}
\end{aligned}
\end{equation*}
for $0\leq q<1$, where $\Omega=:(\omega_{ij})=(\bm{\omega}_1,\bm{\omega}_2,\cdots,\bm{\omega}_p)$. For the special case $q=0$, $U(0,s_0(p))$ is a class of $s_0(p)$-sparse matrices.

\begin{theorem}\label{Convergence-Th}
  Suppose that $\Omega_0\in U(q,s_0(p))$ and $\hat{\Omega}$ is the solution solved through GISS$^{\rho}$ process with $\rho\geq1$.
  \begin{itemize}
   \item Assume that (C1) holds. Let $\lambda_n=C_0M\sqrt{\frac{\mathrm{log}p}{n}}\textless\frac{1}{p}-\varrho$ for some $\varrho\textgreater0$, where $C_0=2\eta^{-2}(2+\tau+\eta^{-1}e^2K^2)^2$ and $\tau>0$. Then
      \begin{align}
       &|\hat{\Omega}-\Omega_0|_\infty\leq C_0\left(3+\frac{1+3p\lambda_n}{1-p\lambda_n}\right)M^2\sqrt{\frac{\mathrm{log}p}{n}},\label{infty_ineq_assunp1}
      \end{align}
   with probability at least $1-4p^{-\tau}$.

   \item Assume that (C2) holds. Let $\lambda_n=D_0M\sqrt{\frac{\mathrm{log}p}{n}}\textless\frac{1}{p}-\varrho$ for some $\varrho\textgreater0$, where $D_0=\sqrt{(5+\tau)(\kappa+1)}$. Then
      \begin{align}
       &|\hat{\Omega}-\Omega_0|_\infty\leq D_0\left(3+\frac{1+3p\lambda_n}{1-p\lambda_n}\right)M^2\sqrt{\frac{\mathrm{log}p}{n}}\label{infty_ineq_assunp2},
    \end{align}
    with probability at least $1-O\left(n^{-\delta/8}+p^{-\tau/2}\right)$.
    \end{itemize}
\end{theorem}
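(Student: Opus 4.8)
The plan is to follow the template of the CLIME analysis in \cite{CLIME}, reducing the statement to (i) an entrywise deviation bound for $\Sigma_n-\Sigma_0$ and (ii) a deterministic $l_\infty$ perturbation estimate; the only genuinely new point is that the GISS$^{\rho}$ output must be controlled through its stopping criterion rather than through exact $l_1$-optimality. The two bullets of the theorem differ only in the probabilistic step. First I would establish $|\Sigma_n-\Sigma_0|_\infty\le C_0\sqrt{\log p/n}$ with probability at least $1-4p^{-\tau}$ under (C1), and $|\Sigma_n-\Sigma_0|_\infty\le D_0\sqrt{\log p/n}$ with probability at least $1-O(n^{-\delta/8}+p^{-\tau/2})$ under (C2). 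These are exactly the two entrywise concentration lemmas of \cite{CLIME}: under exponential-type tails $\hat\sigma_{ij}-\sigma^0_{ij}$ is a centered mean of sub-exponential variables, so a Bernstein inequality together with a union bound over the $p^2$ entries gives the first estimate with the stated $C_0=2\eta^{-2}(2+\tau+\eta^{-1}e^2K^2)^2$; under polynomial-type tails one truncates and uses a Nagaev-type moment inequality, which is responsible both for the extra $n^{-\delta/8}$ term and for $D_0=\sqrt{(5+\tau)(\kappa+1)}$. Since $\lambda_n/M$ is precisely $C_0\sqrt{\log p/n}$ (resp. $D_0\sqrt{\log p/n}$), from here on I work on the event $\mathcal{E}=\{\,|\Sigma_n-\Sigma_0|_\infty\le\lambda_n/M\,\}$, which carries the asserted probability, and the following steps are identical for the two cases.

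\textbf{Step 2: near-feasibility of $\Omega_0$ and the GISS$^{\rho}$ residual.} On $\mathcal{E}$, $\Sigma_0\Omega_0=I$ gives $\Sigma_n\Omega_0-I=(\Sigma_n-\Sigma_0)\Omega_0$, hence $|\Sigma_n\Omega_0-I|_\infty\le|\Sigma_n-\Sigma_0|_\infty\,\|\Omega_0\|_{L_1}\le(\lambda_n/M)M=\lambda_n$, so every column of $\Omega_0$ already satisfies the stopping test. On the algorithmic side I invoke the finite-step termination of GISS$^{\rho}$ from \cite{Mller2016FastSR} (the factor $\rho\ge1$ only enlarges the active set and shortens the run; it does not change the exit condition of the while-loop): the returned columns $\hat{\bm\beta}_i$ satisfy $|\Sigma_n\hat{\bm\beta}_i-\bm e_i|_\infty\le\lambda_n$, i.e. $|\Sigma_n\hat\Omega_1-I|_\infty\le\lambda_n$, where $\hat\Omega_1=[\hat{\bm\beta}_1,\dots,\hat{\bm\beta}_p]$ is the pre-symmetrization estimate. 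This is the one place where GISS$^{\rho}$ departs from CLIME: we do not obtain $\|\hat\Omega_1\|_{L_1}\le M$ for free, so the remaining argument is arranged to use only the residual bound.

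\textbf{Step 3: deterministic $l_\infty$ estimate and symmetrization.} Write $\Delta=\Sigma_n-\Sigma_0$, $E_0=\Sigma_n\Omega_0-I$, $E_1=\Sigma_n\hat\Omega_1-I$, so $|E_0|_\infty,|E_1|_\infty\le\lambda_n$ and, since $E_0=\Delta\Omega_0$, also $\Omega_0E_0=\Omega_0\Delta\Omega_0$. Because $\Omega_0$ is symmetric, $\|\Omega_0\|_\infty=\|\Omega_0\|_{L_1}\le M$, hence $\|\Omega_0\Delta\|_\infty\le\|\Omega_0\|_\infty\,p\,|\Delta|_\infty\le p\lambda_n$; this is where the hypothesis $\lambda_n<\tfrac1p-\varrho$ is used, since it forces $p\lambda_n<1$ bounded away from $1$, so that $I+\Omega_0\Delta$ is invertible with $\|(I+\Omega_0\Delta)^{-1}\|_\infty\le(1-p\lambda_n)^{-1}$. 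Left-multiplying $\Sigma_n\hat\Omega_1=I+E_1$ and $\Sigma_n\Omega_0=I+E_0$ by $\Omega_0$, using $\Omega_0\Sigma_n=I+\Omega_0\Delta$, and subtracting gives
\[
(I+\Omega_0\Delta)(\hat\Omega_1-\Omega_0)=\Omega_0E_1-\Omega_0\Delta\Omega_0.
\]
Taking $|\cdot|_\infty$ and applying the submultiplicativity relations $|AB|_\infty\le\|A\|_\infty|B|_\infty$ and $|AB|_\infty\le|A|_\infty\|B\|_{L_1}$ term by term, with $|E_1|_\infty\le\lambda_n$, $|\Delta|_\infty\le\lambda_n/M$, $\|\Omega_0\|_\infty=\|\Omega_0\|_{L_1}\le M$, bounds $|\hat\Omega_1-\Omega_0|_\infty$ by a constant multiple of $M\lambda_n/(1-p\lambda_n)$, which is at most $\tfrac{4M\lambda_n}{1-p\lambda_n}=\left(3+\tfrac{1+3p\lambda_n}{1-p\lambda_n}\right)M\lambda_n$ (one checks $3+\tfrac{1+3p\lambda_n}{1-p\lambda_n}=\tfrac{4}{1-p\lambda_n}$). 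Finally the symmetrization (\ref{sym}) is lossless in $l_\infty$: for each $i,j$, $\hat\omega_{ij}$ is whichever of $\omega_{ij},\omega_{ji}$ has smaller modulus, and since $\omega^0_{ij}=\omega^0_{ji}$ both candidates lie within $|\hat\Omega_1-\Omega_0|_\infty$ of $\omega^0_{ij}$, so $|\hat\Omega-\Omega_0|_\infty\le|\hat\Omega_1-\Omega_0|_\infty$. Substituting $\lambda_n=C_0M\sqrt{\log p/n}$ (resp. $D_0M\sqrt{\log p/n}$) then yields (\ref{infty_ineq_assunp1}) (resp. (\ref{infty_ineq_assunp2})) on the event $\mathcal{E}$.

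\textbf{Main obstacle.} Steps 1 and 3 are, respectively, classical and a matter of careful bookkeeping; the delicate point is Step 2, namely certifying that GISS$^{\rho}$ actually halts with its residual below $\lambda_n$. When $p>n$ the sample covariance $\Sigma_n$ is singular, the equality constraint $\Sigma_n\bm\beta_i=\bm e_i$ of (\ref{model3}) may be infeasible, and the exact inverse-scale-space flow then converges only to a minimal-$l_1$ least-squares point whose $l_\infty$ residual need not be $\le\lambda_n$ a priori. Reconciling this with the stated stopping test is exactly the role of the regularized $\Sigma_{n,\gamma}=\Sigma_n+\gamma I$ of (\ref{eq:sigma_regu}), which is invertible so that the flow does reach residual $0$, together with a perturbation argument in $\gamma$; and one must check that the $\rho$-enlargement of the active set in step (3.1) of Algorithm \ref{GISSrho} preserves the monotonicity and finite-termination properties inherited from aISS. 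Once GISS$^{\rho}$ is known to exit with $|\Sigma_n\hat{\bm\beta}_i-\bm e_i|_\infty\le\lambda_n$, the argument above closes identically in both tail regimes.
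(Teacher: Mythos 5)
Your proof is correct and reaches the stated bound, but the central deterministic step is genuinely different from the paper's. The paper's Lemma \ref{lem: 1} first shows, columnwise, that $|\Sigma_0(\hat{\bm\beta}-\bm\beta)|_\infty\le 3\lambda_n+\lambda_n|\bm\beta|_1/\|\Omega_0\|_{L_1}$ by repeated triangle inequalities, and then --- this is precisely the step you arrange to avoid --- controls $\|\hat{\Omega}_1\|_{L_1}$ through the subgradient inequality $|\hat{\bm\beta}|_1-|\bm\beta|_1-\langle\tilde{\bm p},\hat{\bm\beta}-\bm\beta\rangle\ge0$ with $|\tilde{\bm p}|_1\le p$, obtaining $\|\hat{\Omega}_1\|_{L_1}\le\frac{1+3p\lambda_n}{1-p\lambda_n}\|\Omega_0\|_{L_1}$; combining the two estimates produces exactly the factor $3+\frac{1+3p\lambda_n}{1-p\lambda_n}$ in the theorem. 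You instead use the identity $(I+\Omega_0\Delta)(\hat{\Omega}_1-\Omega_0)=\Omega_0E_1-\Omega_0\Delta\Omega_0$ together with the Neumann-series bound $\|(I+\Omega_0\Delta)^{-1}\|_\infty\le(1-p\lambda_n)^{-1}$, which requires only the residual bound $|E_1|_\infty\le\lambda_n$ and in fact yields the sharper constant $2M\lambda_n/(1-p\lambda_n)\le 4M\lambda_n/(1-p\lambda_n)=\bigl(3+\tfrac{1+3p\lambda_n}{1-p\lambda_n}\bigr)M\lambda_n$, so the stated inequality follows a fortiori. What each route buys: yours is more elementary and applies to \emph{any} estimator satisfying the stopping test, without assuming the GISS$^{\rho}$ output carries a valid subgradient certificate $\tilde{\bm p}\in\partial|\bm\beta|_1$ (a property the paper's proof quietly relies on even though GISS drops the sign-consistency constraint of aISS); the paper's route additionally delivers the explicit operator-norm control $\|\hat{\Omega}_1\|_{L_1}\le\frac{1+3p\lambda_n}{1-p\lambda_n}M$, which is what CLIME-style arguments reuse to derive spectral- and Frobenius-norm rates. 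The probabilistic step (entrywise concentration of $\Sigma_n-\Sigma_0$ under (C1)/(C2), giving the constants $C_0$, $D_0$ and the stated probabilities) is imported from \cite{CLIME} in both treatments, and your symmetrization argument coincides with the paper's. Finally, the termination issue you flag in your last paragraph --- certifying that GISS$^{\rho}$ actually exits with $|\Sigma_n\bm\beta_i-\bm e_i|_\infty\le\lambda_n$ when $\Sigma_n$ is singular --- is a real point that the paper does not address (it simply assumes the returned column satisfies the stopping criterion), so your proposal is, if anything, the more careful of the two on this front.
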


From the Theorem \ref{Convergence-Th}, we find that the convergence rate of our estimator is  same to the one of CLIME estimator \cite{CLIME} in the sense of elementwise $l_\infty$ norm under two types of moment conditions, which consequently outperform $l_1$-MLE type estimators in the case of polynomial-type tails \cite{ravikumar2011high}.

Similarly, we can obtain the following convergence results:
\begin{theorem}\label{Convergence-Th(C3)}
  Suppose $\Omega_0\in U(q,s_0(p))$ and ($C1$) holds. Let $\lambda_n=C_0M\sqrt{\frac{\mathrm{log}p}{n}}\textless\frac{1}{p}-\varrho$ for some $\varrho\textgreater0$ with $C_0$ defined in Theorem \ref{Convergence-Th} and $\tau$ is sufficiently large. Let $\gamma=\sqrt{\frac{\mathrm{log}p}{n}}$. If $p\geq n^{\Xi}$ for some $\Xi\geq0$. Then
  \begin{align}
   &\mathrm{sup}_{\Omega_0\in U}\mathrm{E}|\hat{\Omega}_\gamma-\Omega_0|^2_\infty=O\left(M^4\frac{\mathrm{log}p}{n}\right),
 \end{align}
  \end{theorem}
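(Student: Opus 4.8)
The plan is to bound the expectation $\mathrm{E}|\hat{\Omega}_\gamma-\Omega_0|^2_\infty$ by splitting the sample space into the ``good'' event on which the high-probability $l_\infty$ bound from Theorem \ref{Convergence-Th} (applied to the regularized covariance $\Sigma_{n,\gamma}$ with $\gamma=\sqrt{\log p/n}$) holds, and its complement. On the good event the squared error is controlled deterministically by $O(M^4\log p/n)$; on the complement, whose probability is at most $4p^{-\tau}$, we only need a crude polynomial-in-$p$ deterministic bound on $|\hat{\Omega}_\gamma-\Omega_0|^2_\infty$, and then choosing $\tau$ large enough (as permitted by the hypothesis ``$\tau$ is sufficiently large'') makes that tail contribution negligible, i.e. also $O(M^4\log p/n)$ or smaller. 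The role of $p\geq n^{\Xi}$ is to translate the tail probability $p^{-\tau}$ into a power of $n$ so it can be absorbed.

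First I would verify that Theorem \ref{Convergence-Th} applies to $\Sigma_{n,\gamma}$: one needs that the stopping criterion $|\Sigma_{n,\gamma}\bm{\beta}_i-\bm{e}_i|_\infty\le\lambda_n$ is feasible and that the relevant concentration bound $|\Sigma_{n,\gamma}-\Sigma_0|_\infty\le$ (something like $\lambda_n/M + \gamma$) holds with probability $1-4p^{-\tau}$; since $\gamma=\sqrt{\log p/n}$ is of the same order as $\sqrt{\log p/n}$, the regularization only changes constants, so the conclusion $|\hat{\Omega}_\gamma-\Omega_0|_\infty \le C M^2\sqrt{\log p/n}$ persists on an event $\mathcal{E}$ with $\Pr(\mathcal{E}^c)\le 4p^{-\tau}$. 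Next I would establish a deterministic worst-case bound: $|\hat{\Omega}_\gamma|_\infty$ is controlled because each column $\hat{\bm\beta}_i$ is a least-squares solution against $\Sigma_{n,\gamma}$, whose smallest eigenvalue is at least $\gamma>0$, giving $|\hat{\bm\beta}_i|_2\le |\bm e_i|_2/\gamma = 1/\gamma = \sqrt{n/\log p}$; combined with $|\Omega_0|_\infty\le\|\Omega_0\|_{L_1}\le M$ this yields $|\hat{\Omega}_\gamma-\Omega_0|_\infty^2 \le (\,\sqrt{n/\log p}+M\,)^2 = O(n)$, a polynomial bound in $n$ (hence in $p$ via $p\le c_1 n^{\tilde\gamma}$ or directly).

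Then I would assemble the two pieces:
\begin{align*}
\mathrm{E}|\hat{\Omega}_\gamma-\Omega_0|^2_\infty
&= \mathrm{E}\big[|\hat{\Omega}_\gamma-\Omega_0|^2_\infty\,\mathbb{I}(\mathcal{E})\big]
 + \mathrm{E}\big[|\hat{\Omega}_\gamma-\Omega_0|^2_\infty\,\mathbb{I}(\mathcal{E}^c)\big] \\
&\le C^2 M^4 \frac{\log p}{n} + O(n)\cdot 4p^{-\tau}.
\end{align*}
Using $p\geq n^{\Xi}$, the second term is $O\!\left(n^{1-\Xi\tau}\right)$, which for $\tau$ large enough (depending on $\Xi$) is $o(M^4\log p/n)$, and then taking the supremum over $\Omega_0\in U$ — legitimate because all constants $C_0$, $M$, $\kappa$ are uniform over the class — gives the claimed $O(M^4\log p/n)$. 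The main obstacle I anticipate is the deterministic tail bound: one must make sure the least-squares iterate $\hat{\bm\beta}_i$ produced by the GISS$^\rho$ stopping rule (not merely the unconstrained minimizer $\Sigma_{n,\gamma}^{-1}\bm e_i$) genuinely satisfies a polynomial-in-$n$ norm bound on \emph{every} sample realization, including degenerate ones; this likely requires observing that step 2 of Algorithm \ref{GISSrho} solves a least-squares problem restricted to a support $P_{I_k}$, so $|\hat{\bm\beta}_i|_2$ is bounded by $|\bm e_i|_2$ divided by the smallest nonzero singular value of the relevant submatrix of $\Sigma_{n,\gamma}$, which is $\ge\gamma$, and that the algorithm terminates in finitely many steps so no limiting argument is needed. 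A secondary subtlety is handling $\log p$ versus $n$ when $p$ could be as small as $n^{\Xi}$ with $\Xi$ close to $0$; but since $\log p \ge \Xi \log n$, the rate $M^4\log p/n$ still dominates the tail as long as $\tau > 1/\Xi$ (or $\Xi=0$ is excluded/handled by the polynomial-tail bound on $p$).
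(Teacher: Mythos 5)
Your proposal is correct and follows essentially the same route as the paper, which proves this theorem only by deferring to Theorem 2 and Theorem 5 of \cite{CLIME} together with Lemma \ref{lem: 1}: the argument behind those cited results is exactly your decomposition into the high-probability event where the elementwise bound of Theorem \ref{Convergence-Th} holds and its complement, on which the $\gamma$-regularization of $\Sigma_{n,\gamma}$ supplies the crude deterministic bound of order $1/\gamma=\sqrt{n/\log p}$, with $\tau$ large and $p\geq n^{\Xi}$ absorbing the tail. Your added care about bounding the GISS$^{\rho}$ iterate itself (via the smallest singular value of the support-restricted submatrix and finite termination) and your remark that $\Xi=0$ is problematic are both apt refinements of what the paper leaves implicit.
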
\par

 \subsection{Sparsity  recovery properties}
In this subsection, we provide some analysis from the point of view of sparsity recovery in compressive sensing \cite{tropp2004greed,zhao2006model,donoho2001uncertainty} as the original idea of GISS$^{\rho}$ algorithm  from the signal processing for the noisy free case.   Since our method can be computed in the parallelism form, we consider only one column recovery of the precision for simplicity. Denote $\bm{r}=\hat{\Sigma}\bm{\beta}^*-\bm{e}$ where $\bm{e}\in\mathbb{R}^p$ stands for one column of the identity matrix ${I}_p$, $\hat{\Sigma}$ is the $p\times p$ sample or true covariance matrix and $\bm{\beta}^*$ is the related column of the precision matrix to be recovered. In the algorithm, the algorithm is stopped with the criteria $|\bm{r}|_\infty\leq \lambda_n$.  In the above section, we have analyzed the convergence of the approximation in terms of $\lambda_n$. In this section, we will show two theoretical results on the sparsity recovery guarantee with the assumption that the residual is a gaussian random variable.  More precisely, we analyze the setting that the linear operator as the covariance matrix or the approximation of covariance matrix as a linear operator, the columns of the precision matrix is the sparse vector to be recovered. The problem is reformulated as a sparse  recovery problem as followed,
\begin{equation}\label{gaussian_model}
  \bm{r}= \hat{\Sigma}\bm{\beta}^*-\bm{e} \sim N(\bm{0},\epsilon^2I_p).
\end{equation}

The assumption of the residual or observation error  is to be Gaussian with standard variation $\epsilon$. We assume that each $\bm{\beta}^*$ has less than $s\leq p$ nonzero components.
For convenience, let $S$ = supp($\bm{\beta}^*$) be the support of the nonzero index  set of $\bm{\beta^*}$ and $T$ be its complement set. $\hat{\Sigma}_S$ denotes the submatrix of $\hat{\Sigma}$ formed by the columns of $\hat{\Sigma}$ in $S$, which are assumed to be linearly independent. One can similarity  define $\hat{\Sigma}_T$.\par
Before providing the sparsity recovery guarantee, we present necessary assumptions on the design matrix $i.e.$ covariance matrix $\hat{\Sigma}$.\\
(A1) $\textbf{Mutual\ Incoherence\ Condition}$:
\begin{equation*}
\quad  \mu:=\mathrm{max}_{i,j}\left|\frac{1}{p}\langle \hat{\Sigma}_i,\hat{\Sigma}_j\rangle\right|\textless\frac{1}{2s-1}, \quad s=|S|.
\end{equation*}
It can be shown \cite{tropp2004greed,burger2013adaptive} that once A1 holds, then
\begin{equation*}
  \theta=1-\mu(s-1)
\end{equation*}
and
\begin{equation*}
  \vartheta = \frac{1-\mu(2s-1)}{1-\mu(s-1)}.
\end{equation*}
where $\theta$ and $\vartheta$ satisfies $\hat{\Sigma}_{S}^*\hat{\Sigma}_{S}\geq \theta I$ and $\|\hat{\Sigma}_T^*\hat{\Sigma}_S^\dagger\|_\infty\leq 1-\vartheta$ for $\hat{\Sigma}_S^\dagger:=\hat{\Sigma}_S(\frac{1}{p}\hat{\Sigma}_S^T\hat{\Sigma}_S)^{-1}$ defined in Restricted Strong Convexity and  Irrepresentable Conditions respectively. Moreover,
 the above two popular conditions can be verified with condition A1, while they are more difficult to be checked in practice \cite{tropp2004greed,burger2013adaptive}.

In the following, we will show that  with large probability, the solution with the stopping rule  $|\bm{r}|_\infty\leq \lambda_n$ recovers  the true subset of the support index, under the assumption of Gaussian residual.

We further define the residual of the iterate $\bm{r}(t)=\bm{e}-\hat{\Sigma}\bm{\beta}(t)$ and then define the largest and the smallest nonzero magnitudes of $\bm{\beta}^*$ by $\bm{\beta}^*_\mathrm{max}:=\mathrm{max}(|\bm{\beta}_i^*|:i\in S)$ and $\bm{\beta}^*_\mathrm{min}:=\mathrm{min}(|\bm{\beta}_i^*|:i\in S)$ respectively.

The following Lemma show that the Gaussian noise is essentially bounded~\cite{cai2009recovery,cai2011orthogonal}.
\begin{lemma}
  For  $\bm{r}\sim N(\bm{0},\epsilon^2I_p)$ satisfies
  \begin{subequations}
  \begin{align}
    &P(\bm{r}\in B_2)\geq1-\frac{1}{p} \label{2-gau} \\
    &P(\bm{r}\in B_\infty(\varsigma))\geq1-\frac{1}{2p^\varsigma\sqrt{\pi\mathrm{log}p}} \label{infty-gau}
  \end{align}
  \end{subequations}
  where $B_2=\left\{\bm{r}:|\bm{r}|_2\leq\epsilon\sqrt{p+2\sqrt{p\mathrm{log}p}}\right\}$ and $B_\infty(\varsigma)=\left\{\bm{r}:|\hat{\Sigma}^T\bm{r}|_\infty\leq\epsilon\sqrt{2(1+\varsigma)\mathrm{log}p}\right\}$ with $\varsigma>0$.
\end{lemma}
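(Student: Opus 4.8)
Both bounds are instances of classical Gaussian tail behaviour, and the plan is to handle them by independent one-line estimates: \eqref{2-gau} via a chi-square concentration inequality, and \eqref{infty-gau} via a scalar Gaussian tail bound together with a union bound over coordinates. For \eqref{2-gau}, write $\bm{r}=\epsilon\,\bm{g}$ with $\bm{g}\sim N(\bm{0},I_p)$, so that $\epsilon^{-2}|\bm{r}|_2^2=|\bm{g}|_2^2$ follows a chi-square law with $p$ degrees of freedom. I would then apply the Chernoff bound $P(|\bm{g}|_2^2\geq t)\leq\exp(-\tfrac12(t-p)+\tfrac{p}{2}\log(t/p))$ valid for $t>p$ (equivalently the Laurent--Massart inequality $P(|\bm{g}|_2^2-p\geq 2\sqrt{px}+2x)\leq e^{-x}$). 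Substituting $t=p+2\sqrt{p\log p}$, i.e.\ $t/p=1+u$ with $u=2\sqrt{\log p/p}$, and using $\log(1+u)-u\leq-\tfrac{u^2}{2}+\tfrac{u^3}{3}$, the exponent is at most $-\log p+\tfrac43(\log p)^{3/2}p^{-1/2}$, hence $\leq-\log p$ for $p$ large; this yields $P(|\bm{r}|_2^2>\epsilon^2(p+2\sqrt{p\log p}))\leq 1/p$, which is \eqref{2-gau}.

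For \eqref{infty-gau}, observe that the $j$-th coordinate of $\hat{\Sigma}^T\bm{r}$ equals $\langle\hat{\Sigma}_{\cdot,j},\bm{r}\rangle$, a linear functional of a Gaussian vector, hence is distributed as $N(0,\epsilon^2\|\hat{\Sigma}_{\cdot,j}\|_2^2)$; under the column normalization $\|\hat{\Sigma}_{\cdot,j}\|_2\leq 1$ implicit in the set-up, its variance is at most $\epsilon^2$. I would then invoke the Mills-ratio bound $P(|Z|>a)\leq\sqrt{2/\pi}\,(\sigma/a)\,e^{-a^2/(2\sigma^2)}$ for $Z\sim N(0,\sigma^2)$, with $\sigma=\epsilon$ and $a=\epsilon\sqrt{2(1+\varsigma)\log p}$, to obtain $P(|\langle\hat{\Sigma}_{\cdot,j},\bm{r}\rangle|>a)\leq(\pi(1+\varsigma)\log p)^{-1/2}p^{-(1+\varsigma)}$, and a union bound over $j=1,\dots,p$ multiplies this by $p$, giving $P(|\hat{\Sigma}^T\bm{r}|_\infty>a)\leq(\pi(1+\varsigma)\log p)^{-1/2}p^{-\varsigma}$, which is at most the stated $\tfrac{1}{2p^\varsigma\sqrt{\pi\log p}}$ after an elementary check of the numerical constant.

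There is no conceptual obstacle here; the only points that need attention are bookkeeping ones. In \eqref{2-gau} one must keep the second-order term of $\log(1+u)$ --- a purely first-order estimate leaves the exponent at $0$ --- and, to be fully rigorous about turning a $1+o(1)$ prefactor into an exact constant, one either takes $p$ large enough or quotes the precise statement recorded in \cite{cai2009recovery,cai2011orthogonal}. In \eqref{infty-gau} the delicate part is tracking the normalization of the columns of $\hat{\Sigma}$ and the paper's $\tfrac1p$-inner-product convention so that the per-coordinate variance proxy and the leading constants $\tfrac12$, $\sqrt{\pi}$ come out exactly as claimed (for general $\varsigma>0$ one obtains the same form with a harmless absolute constant in place of $\tfrac12$, which is immaterial for the arguments that follow). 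Everything else is a routine calculation, and the whole lemma can be cited from \cite{cai2009recovery,cai2011orthogonal}.
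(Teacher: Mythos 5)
The paper does not actually prove this lemma: it is stated as a known fact and attributed to \cite{cai2009recovery,cai2011orthogonal}, so there is no in-paper argument to compare against. Your route --- a chi-square Chernoff/Laurent--Massart bound for the $\ell_2$ event and a scalar Gaussian tail estimate plus a union bound over the $p$ coordinates for the $\ell_\infty$ event --- is exactly the standard argument used in those references, and it is the right one.

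Two small points of bookkeeping, both of which you already flag but one of which is stated too optimistically. In the $\ell_2$ part your exponent comes out as $-\log p+\tfrac{4}{3}(\log p)^{3/2}p^{-1/2}$, and since the correction term is \emph{positive} this is strictly larger than $-\log p$ for every $p$; the phrase ``hence $\leq-\log p$ for $p$ large'' is therefore false as written, and what you actually get is $P(\bm{r}\notin B_2)\leq p^{-1}(1+o(1))$. To land exactly on $1-1/p$ one has to quote the sharper chi-square inequality used in \cite{cai2009recovery} rather than the plain second-order expansion of $\log(1+u)$. In the $\ell_\infty$ part, the Mills-ratio computation gives $(\pi(1+\varsigma)\log p)^{-1/2}p^{-\varsigma}$, which beats the stated $\tfrac{1}{2}(\pi\log p)^{-1/2}p^{-\varsigma}$ only when $1+\varsigma\geq4$; for small $\varsigma$ the constant $\tfrac12$ is not recovered, and one also needs the column normalization $|\hat{\Sigma}_{\cdot,j}|_2\leq1$, which the paper never states (its incoherence condition is written with the $\tfrac1p$-scaled inner product, suggesting $|\hat{\Sigma}_{\cdot,j}|_2=\sqrt{p}$). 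Neither issue affects how the lemma is used downstream, but a fully self-contained proof would either adjust the constants or import the normalization conventions of \cite{cai2011orthogonal} explicitly.
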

For the two essentially bounded cases, we have
\begin{theorem}\label{Th1_gaussian}
Assume that (A1) holds and  $\bm{r}\sim N(\bm{0},\epsilon^2I_p)$, then
\begin{itemize}
\item For $l_2$ bounded Gaussian noise case (refer to (\ref{2-gau})),
   \begin{equation}
     \bm{\beta}^*_{\mathrm{min}}\geq\frac{2\epsilon}{\sqrt{\theta}}\left(\sqrt{1+2\sqrt{\frac{\mathrm{log} p}{p}}}+\sqrt{\frac{\mathrm{log}s}{p}}\right)
   \end{equation}
The GISS$^{\rho}$ algorithm with the stopping rule $\|\bm{r}(t)\|_\infty\leq\epsilon\sqrt{1+2\sqrt{\frac{\mathrm{log}p}{p}}}$ selects the true subset of $S$ with probability at least $1-\frac{\mathrm{exp}{\left(-\frac{p}{2}+\frac{1}{2}\left(1+2\sqrt{\frac{\mathrm{log}p}{p}}\right)+\frac{p}{2}\mathrm{log}\left(2-\frac{1}{p}\left(1+2\sqrt{\frac{\mathrm{log}p}{p}}\right)\right)\right)}}{1-\frac{1}{p}\left(1+2\sqrt{\frac{\mathrm{log}p}{p}}\right)}-O(p^{-1})$.
\item For  $l_\infty$ bounded Gaussian noise case (refer to (\ref{infty-gau})),
  \begin{equation}
    \bm{\beta}^*_{\mathrm{min}}\geq\frac{2\epsilon\sqrt{\mathrm{max}_i\|\hat{\Sigma}_i\|_p (1+\varsigma)s\mathrm{log}p}}{{p\theta}\|\hat{\Sigma}\|_{L_1}}+2\epsilon\sqrt{\frac{\mathrm{log}s}{p\theta}}.
  \end{equation}
The GISS$^{\rho}$ with the stopping rule $\|\bm{r}(t)\|_\infty\leq2\epsilon\|\hat{\Sigma}\|_{L_1}^{-1}{\sqrt{\mathrm{max}_i\|\hat{\Sigma}_i\|_p {(1+\varsigma)}\mathrm{log}p}}$ selects the true subset $S$ with probability at least  $1-O(p^{-1})$.
\end{itemize}
\end{theorem}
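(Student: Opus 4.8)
The plan is to condition on the ``good noise'' event supplied by the preceding Lemma --- $\bm{r}\in B_2$ in the first case and $\bm{r}\in B_\infty(\varsigma)$ in the second --- and then to show that, on this event, the GISS$^{\rho}$ iterates never activate an index outside $S$ and must have recovered all of $S$ by the time the stated stopping rule fires. Since each stopping threshold is exactly the noise magnitude guaranteed by the Lemma, ``full recovery of $S$'' and ``termination'' coincide, and the asserted probability is then the Lemma's probability intersected with the (conditional) event that no index of $T$ is ever activated along the dual flow.

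First I would run an induction on the active sets $I_k$ produced by Step~1 of Algorithm~\ref{GISSrho}. Assuming $I_k\subseteq S$, Step~2 fits $\bm{\beta}(t_k)$ by least squares on the columns indexed by $I_k$, so the residual $\bm{r}_k=\bm{e}-\hat{\Sigma}\bm{\beta}(t_k)$ splits into a ``signal'' piece carried by $\bm{\beta}^*$ on $S\setminus I_k$ and a ``noise'' piece inherited from $\bm{r}$. Applying $\hat{\Sigma}^{T}$ and invoking the two consequences of A1 recorded just after it --- the restricted strong convexity $\hat{\Sigma}_S^{*}\hat{\Sigma}_S\ge\theta I$ and the irrepresentable bound $\|\hat{\Sigma}_T^{*}\hat{\Sigma}_S^{\dagger}\|_\infty\le 1-\vartheta$ --- one keeps the $T$-coordinates of the evolving dual variable $\bm{p}_j(t)=\bm{p}_j(t_k)+(t-t_k)(\hat{\Sigma}^{T}\bm{r}_k)_j$ strictly below $1$ in absolute value at the first time some $|\bm{p}_i(t)|=1$, provided $\bm{\beta}^*_{\min}$ dominates the noise term. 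This is exactly where the two lower bounds on $\bm{\beta}^*_{\min}$ enter: the $l_2$ bound through $|\bm{r}|_2\le\epsilon\sqrt{p+2\sqrt{p\log p}}$, the $l_\infty$ bound through $|\hat{\Sigma}^{T}\bm{r}|_\infty\le\epsilon\sqrt{2(1+\varsigma)\log p}$ together with $\hat{\Sigma}_S^{*}\hat{\Sigma}_S\ge\theta I$. For $\rho\ge1$ one additionally checks that at the enlarged time $t_{k+1}$ the $T$-coordinates have still not reached magnitude $1$, which reduces to the same inequality once the stopping threshold is in force; hence $I_{k+1}\subseteq S$.

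Next I would close the loop. Since every nonzero $|\beta^*_i|$ exceeds a fixed multiple of the noise level by the assumed bound on $\bm{\beta}^*_{\min}$, a least-squares fit on any proper subset $I_k\subsetneq S$ leaves a residual whose relevant norm still exceeds the stopping threshold, so the while-loop cannot exit early; and once $I_k=S$ the residual equals the projection of the Gaussian $\bm{r}$ off $\mathrm{range}(\hat{\Sigma}_S)$, of norm at most that of $\bm{r}$ in the pertinent norm, hence below the threshold by the Lemma --- so the loop exits with active set $S$. For the probabilities, in the $l_2$ case I would intersect $\{\bm{r}\in B_2\}$ (probability $\ge 1-1/p$) with the event that the dual trajectory does not prematurely activate a $T$-index, the latter estimated by a chi-square lower-tail bound on $|\bm{r}|_2^2$ that produces the ratio $\exp(\cdots)/(1-p^{-1}(1+2\sqrt{\log p/p}))$, the residual $O(p^{-1})$ absorbing the Lemma's slack; the $l_\infty$ case runs identically from (\ref{infty-gau}), all lower-order terms collapsing into $O(p^{-1})$.

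The main obstacle is the trajectory analysis of $\bm{p}(t)$ under the $\rho$-acceleration: enlarging the step by the factor $\rho$ can sweep several coordinates to magnitude $1$ simultaneously, so one must verify the stopping rule halts the flow before any $T$-coordinate can catch up. That step genuinely couples the incoherence constant $\vartheta$, the conditioning constant $\theta$, the lower bound on $\bm{\beta}^*_{\min}$, and the precise numerical stopping threshold, and it is what forces the two distinct thresholds and probability estimates in the $l_2$ and $l_\infty$ regimes; the least-squares residual identities, the norm conversions, and the chi-square tail estimate are otherwise routine.
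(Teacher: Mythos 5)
Your plan matches the paper's proof in its essentials: both arguments split into (i) showing the stopping rule fires with high probability once $S$ is recovered, because the residual is then the projected noise $(I-P_S)\bm{r}$ and a chi-square tail bound yields exactly the stated exponential ratio, and (ii) showing the residual cannot fall below the threshold while some $i\in S$ is still inactive, via $|\hat{\Sigma}_S(\bm{\beta}^*-\bm{\beta}_S(t))|_\infty\geq\sqrt{\theta}\,\bm{\beta}^*_{\mathrm{min}}-b_\infty$ together with the $2\epsilon\sqrt{\log s/(p\theta)}$ correction that produces the stated lower bound on $\bm{\beta}^*_{\mathrm{min}}$. The only difference is one of detail: you spell out the no-false-activation induction on the dual trajectory using the irrepresentable consequence of (A1), which the paper leaves implicit by deferring to Theorems 6.1--6.2 of \cite{osher2016sparse}.
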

Theorem \ref{Th1_gaussian} introduces a stopping rule with $l_\infty$ norm which only depends the residual level assumption and the conditions on $\beta^*_\mathrm{min}$  to ensure that all significant variables are selected~\cite{cai2011orthogonal}.
\section{Numerical Experiments}
\label{sec:ne}
\subsection{Experiments Setup}
In the data simulation, we consider the following three cases.
\begin{itemize}
\item Case 1: Solve the sparse matrix with a designed covariance matrix: $\Sigma_{ij} = 0.5^{|i-j|}$. The exact precision matrix $\Omega=\Sigma^{-1}$ is given as
\begin{eqnarray}
\Omega_{ij}=\left\{\begin{array}{lll}
  1.3333,                      &\mathrm{for}~ i=j=1,p \\
  1.6667,                      &\mathrm{for}~ 1<i=j<p \\
-0.6667,                       &|i-j|=1 \\
  0,                           &\mathrm{otherwise}.
\end{array}\right.
\end{eqnarray}

\item Case 2: The second simulation refers to \cite{rothman2008sparse}. Assume that the true precision matrix $\Omega=B+\delta I$, where each off-diagonal entry in $B$ is generated independently and equals 0.5 with probability 0.1 or 0 with probability 0.9. $\delta$ is chosen such that conditional number of $\Omega$ is equal to $p$ and  the matrix is normalized to have the unit diagonals. Then we sample a set of data from the $p-$dimensional normal distribution $N(0,\Sigma)$ where $\Sigma=\Omega^{-1}$. The estimated covariance matrix from this set of generated data is used as $\Sigma_n$ for the recovery.
\item Case  3: Application to the fMRI data for ADHD. Attention Deficit Hyperactivity Disorder (ADHD) is a mental disorder causing 5\%-10\% of school-age children in behavior controlling through the United States. The \href{http://fcon1000.projects.nitrc.org/indi/adhd200/}{ADHD-200 project} released a resting-state fMRI dataset of healthy controls and ADHD children for scientific research. The data we processed is from Kennedy Krieger Institute (KKI), one of participating centers. The preprocessing pipeline can be found in \href{https://www.nitrc.org/plugins/mwiki/~index.php/neurobureau:AthenaPipeline}{https://www.nitrc.org/plugins/mwiki/~index.php/neurobureau:AthenaPipeline}. After preprocessing, we get 148 time points from each of 116 brain regions for each subject. We use the data of each subject to generate the sample covariance matrix for estimating the precision matrix which is similar to the methodology of numerical examples in \cite{liu2015fast}.\par

\end{itemize}

The above three simulation setting are tested with different algorithms for a comparison to ours.  The first one is Hard Thresholding Pursuit (HTP)  proposed in ~\cite{foucart2011hard} for solving the basis pursuit problem similar to (\ref{model3}). The algorithm is a combination of the Iterative Hard Thresholding algorithm and the Compressive Sampling Matching Pursuit algorithm (Cosamp). The other algorithms that we draw into comparisons are based on $l_1$ regularizer, such as L1-magic ~\cite{candes2005l1} and Alternating direction method of multipliers (ADMM) \cite{boyd2004convex,gabay1975dual,chan1978finite}. We denote $\text{ADMM}_{\lambda_n}$ for solving inequality constrained  model (\ref{model2}) and ADMM for the equality constrained model (\ref{model3}) for comparison.

The computation time will be reported with a  computation environment : MATLAB R2016b, Intel(R) Xeon(R) CPU E5-2689 v4 @3.10GHz.
\subsection{Results}
The result for Case 1 with different dimension $p$ is shown in the Table~\ref{Tabel1}. The third column of the table shows the non-zero entries number of the different methods with the thresholding value $10^{-8}$ applied on the recovered precision matrix for those $l_1$ based methods. The third column shows the result with the thresholding value $10^{-4}$, as we find all the methods successfully recover the true support. The last two columns show the computation time and the relative error to the true solution in terms of Frobenius norm.  We find that both our method GISS$^{\rho}$ (here $\rho=1$) have good computation time performance, especially in the high dimensional settings.

\begin{table}[h!]
\centering
\caption{Case 1}
\label{Tabel1}
\begin{tabular}{|l|l|l|l|l|l|}
\hline
\multirow{2}{*}{Dimension} & \multicolumn{1}{c|}{\multirow{2}{*}{\begin{tabular}[c]{@{}c@{}}Algorithms\end{tabular}}} &  \multicolumn{1}{c|}{\multirow{2}{*}{\begin{tabular}[c]{@{}c@{}}$L_0$ Norm\\ (1e-8)\end{tabular}}} & \multicolumn{1}{c|}{\multirow{2}{*}{\begin{tabular}[c]{@{}c@{}}$L_0$ Norm\\ (1e-4)\end{tabular}}} & \multicolumn{1}{c|}{\multirow{2}{*}{\begin{tabular}[c]{@{}c@{}}Time consuming\\ (s)\end{tabular}}} & \multicolumn{1}{c|}{\multirow{2}{*}{\begin{tabular}[c]{@{}c@{}}Relative Error\\ (Frobenius Norm)\end{tabular}}} \\
                           & \multicolumn{1}{c|}{}                                                                          &                   & \multicolumn{1}{c|}{}                                                                          & \multicolumn{1}{c|}{}                                                                              & \multicolumn{1}{c|}{}                                                                                           \\ \hline
\multirow{5}{*}{200}       & GISS                                                                                           & 598               & \multirow{5}{*}{598}                                                                           & 0.1084                                                                                             & {9.10e-10}                                                                                                        \\ \cline{2-3} \cline{5-6}
                           & HTP                                                                                           & 598               &                                                                                                & \textbf{0.0826}                                                                                    & {9.10e-10}                                                                                                        \\ \cline{2-3} \cline{5-6}
                           & L1-magic                                                                                         & 1780              &                                                                                                & 0.4679                                                                                            & 1.00e-09                                                                                                        \\ \cline{2-3} \cline{5-6}
                           & $\text{ADMM}_{\lambda_n}$                                                                                & 1388              &                                                                                                & 0.2649                                                                                             & 8.14e-10                                                                                                        \\ \cline{2-3} \cline{5-6}
                           & ADMM                                                                                           & 1390              &                                                                                                & 0.2615                                                                                             & 8.99e-10                                                                                                        \\ \hline
\multirow{5}{*}{400}       & GISS                                                                                           & 1198              & \multirow{4}{*}{1198}                                                                          & 0.1002                                                                                    & 9.09e-10                                                                                                        \\ \cline{2-3} \cline{5-6}
                           & HTP                                                                                           & 1198              &                                                                                                & \textbf{0.0975}                                                                                             & 9.09e-10                                                                                                        \\ \cline{2-3} \cline{5-6}
                           & L1-magic                                                                                         & 3580              &                                                                                                & 2.7838                                                                                            & 1.00e-09                                                                                                        \\ \cline{2-3} \cline{5-6}
                           & $\text{ADMM}_{\lambda_n}$                                                                                 & 2788              &                                                                                                & 1.5010                                                                                            & 8.05e-10                                                                                                        \\ \cline{2-3} \cline{5-6}
                           & ADMM                                                                                           & 2792              &                                                                                                & 1.5014                                                                                            & 8.98e-10                                                                                                        \\ \hline
\multirow{5}{*}{600}       & GISS                                                                                           & 1798              & \multirow{5}{*}{1798}                                                                          & \textbf{0.1479}                                                                                     & 9.09e-10                                                                                                        \\ \cline{2-3} \cline{5-6}
                           & HTP                                                                                           & 1798              &                                                                                                & 0.1567                                                                                             & 9.09e-10                                                                                                        \\ \cline{2-3} \cline{5-6}
                           & L1-magic                                                                                          & 5380              &                                                                                                & 16.4614                                                                                           & 1.04e-09                                                                                                        \\ \cline{2-3} \cline{5-6}
                           & $\text{ADMM}_{\lambda_n}$                                                                                & 4188              &                                                                                                & 9.7355                                                                                            & 8.02e-10                                                                                                        \\ \cline{2-3} \cline{5-6}
                           & ADMM                                                                                           & 4192              &                                                                                                & 10.4155                                                                                            & 8.97e-10                                                                                                        \\ \hline
\multirow{5}{*}{800}       & GISS                                                                                           & 2398              & \multirow{5}{*}{2398}                                                                          & \textbf{0.2518}                                                                                     & 9.09e-10                                                                                                        \\ \cline{2-3} \cline{5-6}
                           & HTP                                                                                           & 2398              &                                                                                                & 0.3942                                                                                            & 9.09e-10                                                                                                        \\ \cline{2-3} \cline{5-6}
                           & L1-magic                                                                                          & 7180              &                                                                                                & 113.4222                                                                                           & 1.04e-09                                                                                                        \\ \cline{2-3} \cline{5-6}
                           & $\text{ADMM}_{\lambda_n}$                                                                                 & 5588              &                                                                                                & 36.3055                                                                                            & 8.01e-10                                                                                                        \\ \cline{2-3} \cline{5-6}
                           & ADMM                                                                                           & 5592              &                                                                                                & 37.9462                                                                                            & 8.97e-10                                                                                                        \\ \hline
\multirow{5}{*}{1000}      & GISS                                                                                           & 2998              & \multirow{5}{*}{2998}                                                                          & \textbf{0.4105}                                                                                    & 9.09e-10                                                                                                        \\ \cline{2-3} \cline{5-6}
                           & HTP                                                                                           & 2998              &                                                                                                & 0.7366                                                                                             & 9.09e-10                                                                                                        \\ \cline{2-3} \cline{5-6}
                           & L1-magic                                                                                          & 8980              &                                                                                                & 392.0562                                                                                           & 1.04e-09                                                                                                        \\ \cline{2-3} \cline{5-6}
                           & $\text{ADMM}_{\lambda_n}$                                                                              & 6988              &                                                                                                & 74.3292                                                                                           & 8.00e-10                                                                                                        \\ \cline{2-3} \cline{5-6}
                           & ADMM                                                                                           & 6992              &                                                                                                & 77.3578                                                                                           & 8.97e-10                                                                                                        \\ \hline
\multirow{5}{*}{2000}      & GISS                                                                                           & 5998              & \multirow{5}{*}{5998}                                                                          & \textbf{2.4252}                                                                                    & 9.09e-10                                                                                                        \\ \cline{2-3} \cline{5-6}
                           & HTP                                                                                           & 5998              &                                                                                                & 5.4236                                                                                           & 9.09e-10                                                                                                        \\ \cline{2-3} \cline{5-6}
                           & L1-magic                                                                                          & 17980             &                                                                                                & 6.1685e+03                                                                                          & 1.04e-09                                                                                                        \\ \cline{2-3} \cline{5-6}
                           & $\text{ADMM}_{\lambda_n}$                                                                                  & 13988             &                                                                                                & 709.4917                                                                                          & 7.98e-10                                                                                                        \\ \cline{2-3} \cline{5-6}
                           & ADMM                                                                                           & 13992             &                                                                                                & 733.7321                                                                                          & 8.97e-10                                                                                                        \\ \hline
\end{tabular}
\end{table}

For Case 2, we vary the dimension  $p$ from $200$ to $600$ and replicate 20 times for each algorithms. We adopt 20 cores for a parallelization implementation for this case. In this numerical setting, we take $\rho= 1.2, 1.5, 2.0 > 1$ in GISS$^{\rho}$ for different $p$ and set 0.05 for the thresholding value. Table \ref{my-label2}, \ref{my-label3}, \ref{my-label4} and \ref{TP_TN} show the averages errors (Mean) and standard errors (SE) of losses in terms of different matrix norms: Frobenius norm, $L_1$ norm, $L_2$ norm and elementwise $l_\infty$ norm. True positive (TP) number and True negative (TN) number are also computed. The last column is the average time consuming of each algorithm. {For $p\geq n$ situation, we show the numerical results in Table \ref{p_large_n_1} and \ref{p_large_n_2} for $p=60,~100$ respectively. We set 0.01 for the thresholding value and we found the Frobenius norm results outperform other methods, although the SE are larger than others.} 
 \par
\begin{table}[]
\centering
\caption{Case 2, $p=200$, $\lambda_{200}$=$0.7\sqrt{\frac{\mathrm{log}p}{n}}$ with sample number $n$= 3000.}
\begin{tabular}{|l|l|l|l|l|l|l|}
\hline
$p$=200         & \multicolumn{1}{c|}{Frobenius norm} & Matrix $L_1$ norm              & Operator norm              & \multicolumn{1}{c|}{$l_\infty$ Norm}  & \multicolumn{1}{c|}{\begin{tabular}[c]{@{}c@{}}Time consuming\\ (s)\end{tabular}} \\ \hline
GISS            & 0.5457(0.008)                   & 0.1084(0.016)          & \textbf{0.0125(0.003)}   & 0.1314(0.003)                       & \textbf{0.1219}                                                                   \\ \hline
HTP            & \textbf{0.2485(0.007)}         & \textbf{0.0884(0.016)} & 0.0571(0.003)          & \textbf{0.1088(0.003)}              & 0.3885                                                                            \\ \hline
ADMM            & 0.3343(0.014)                   & 0.2107(0.035)         & 0.0793(0.008)          & 0.1412(0.003)                       & 0.1308                                                                            \\ \hline
ADMM$_{\lambda_n}$ & 1.2255(0.009)                  & 0.5865(0.038)          & 0.2515(0.007)         & 0.1530(0.003)                       & 0.4589                                                                            \\ \hline
\end{tabular}
\label{my-label2}
\end{table}
\begin{table}[]
\centering
\caption{Case 2, $p=400$, $\lambda_{400}$=$0.7\sqrt{\frac{\mathrm{log}p}{n}}$ with sample number $n$= 5000.}
\begin{tabular}{|l|l|l|l|l|l|l|}
\hline
$p$=400         & \multicolumn{1}{c|}{Frobenius norm} & Matrix $L_1$ norm              & Operator norm        & \multicolumn{1}{c|}{$l_\infty$ Norm}  & \multicolumn{1}{c|}{\begin{tabular}[c]{@{}c@{}}Time consuming\\ (s)\end{tabular}} \\ \hline
GISS            & 1.4850(0.011)                   & 0.2796(0.028)          & 0.1253(0.004) & 0.1478(0.002)                       & \textbf{0.8004}                                                                   \\ \hline
HTP             & \textbf{0.4452(0.009)}          & \textbf{0.1192(0.018)} & \textbf{0.1187(0.003)}          & \textbf{0.1056(0.003)}              & 6.3441                                                                            \\ \hline
ADMM            & 0.9107(0.023)                   & 0.1878(0.025)          & 0.1896(0.007)          & 0.1940(0.004)                       & 0.8222                                                                            \\ \hline
ADMM$_{\lambda_n}$ & 2.0134(0.012)                   & 0.5611(0.030)          & 0.3289(0.005)          & 0.1606(0.002)                       & 8.4315                                                                            \\ \hline
\end{tabular}
\label{my-label3}
\end{table}
\begin{table}[]
\centering
\caption{Case 2, $p=600$, $\lambda_{600}$=$0.7\sqrt{\frac{\mathrm{log}p}{n}}$ with sample number $n$= 8000.}
\begin{tabular}{|l|l|l|l|l|l|l|}
\hline
$p$=600         & \multicolumn{1}{c|}{Frobenius norm} & Matrix $L_1$ norm              & Operator norm              & \multicolumn{1}{c|}{$l_\infty$ Norm} & \multicolumn{1}{c|}{\begin{tabular}[c]{@{}c@{}}Time consuming\\ (s)\end{tabular}} \\ \hline
GISS            & 1.6309(0.010)                   & \textbf{0.1536(0.022)} & \textbf{0.0171(0.003)} & 0.1290(0.001)                & \textbf{3.4359}                                                                   \\ \hline
HTP             & \textbf{0.4823(0.007)}          & 0.2450(0.027)         & 0.2918(0.003)          & \textbf{0.0845(0.001)}                & 43.1578                                                                           \\ \hline
ADMM            & 2.1578(0.023)                   & 0.3680(0.031)          & 0.0323(0.007)          & 0.1759(0.001)                       & 12.7318                                                                           \\ \hline
ADMM$_{\lambda_n}$ & 2.2408(0.009)                   & 0.5123(0.020)          & 0.2902(0.004)          & 0.1341(0.001)                      & 13.6197                                                                           \\ \hline
\end{tabular}
\label{my-label4}
\end{table}
\begin{table}[]
\centering
\caption{Case 2, $p=200,~400,~600$.}
\begin{tabular}{|l|l|l|l|l|l|l|}
\hline
\multirow{2}{*}{} & \multicolumn{3}{c|}{\textbf{TP(\%)}}             & \multicolumn{3}{c|}{\textbf{TN(\%)}}           \\ \cline{2-7}
                  & {200}   & 400   & {600}   & {200}   & {400}   & {600} \\ \hline
{GISS}     & 99.93          & 98.43          & 94.29          & 99.71          & 99.93          & \textbf{100} \\ \hline
{HTP}      & 99.95          & 97.95          & 92.35          & \textbf{99.81} & \textbf{99.99} & \textbf{100} \\ \hline
{ADMM}     & 99.75          & 96.01          & 91.89          & 99.1           & 99.94          & 99.97        \\ \hline
ADMM$_{\lambda_n}$     & \textbf{99.97} & \textbf{99.09} & \textbf{96.52} & 98.69          & 99.84          & 99.99        \\ \hline
\end{tabular}
\label{TP_TN}
\end{table}
\begin{table}[]
\centering
\caption{Case 2, $p=60$, $\lambda_{60}$=$0.001\sqrt{\frac{\mathrm{log}p}{n}}$ with sample number $n$= 30.}
\begin{tabular}{|l|l|l|l|l|}
\hline
$p$=60      & Frobenius norm & Matrix $L_1$ norm & Operator norm & $l_\infty$ Norm \\ \hline
GISS      & \textbf{1.3570(0.14)}   & 3.8780(0.21)  & 1.1073(0.14)  & 1.4990(0.15)            \\ \hline
HTP      & 8.4734(0.01)   & \textbf{2.8927(0.01)}  & 2.2451(0.01)  & \textbf{0.8689(0.01)}            \\ \hline
ADMM & 6.8352(0.00)   & 5.8557(0.02)  & \textbf{0.4718(0.00)}  & 1.0770(0.01)            \\ \hline
ADMM$_{\lambda_n}$  & 6.8379(0.00)   & 5.8608(0.02)  & 0.4719(0.00)    & 1.0770(0.01)            \\ \hline
\end{tabular}
\label{p_large_n_1}
\end{table}
\begin{table}[]
\centering
\caption{Case 2, $p=100$, $\lambda_{100}$=$0.001\sqrt{\frac{\mathrm{log}p}{n}}$ with sample number $n$= 50.}
\begin{tabular}{|l|l|l|l|l|}
\hline
$p$=100     & Frobenius norm & Matrix $L_1$ norm & Operator norm & $l_\infty$ Norm \\ \hline
GISS      & \textbf{1.6411(0.14) }  & 5.1000(0.22)  & 0.9443(0.10)  & 1.4988(0.12)            \\ \hline
HTP      & 11.2055(0.01)  & \textbf{4.1144(0.01) } & 2.7254(0.01)  & \textbf{0.9448(0.01) }           \\ \hline
ADMM & 12.7463(0.00)  & 9.2365(0.02)  & 0.7048(0.00)  & 1.4200(0.01)            \\ \hline
ADMM$_{\lambda_n}$  & 12.7505(0.00)  & 9.2454(0.02)  & \textbf{0.7048(0.00) } & 1.4200(0.01)            \\ \hline
\end{tabular}
\label{p_large_n_2}
\end{table}
\par
In Theorem \ref{Convergence-Th}, we showed the convergence rate of the estimator with respect to the elementwise $l_\infty$ norm.
From Figure \ref{Error_fig}, we can also numerically observe the linear convergence rate of $L_2$ norm and Frobenius norm, although this can not be shown theoretically in the current framework.
\begin{figure}[htbp]
	\centering	\subfigure{\includegraphics[width=1\textwidth]{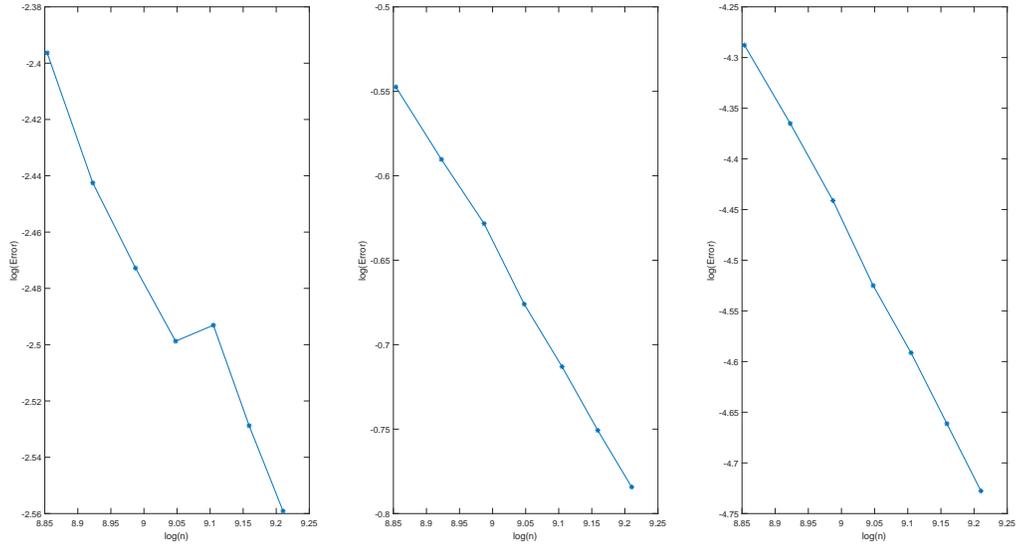}}
	\caption{Error estimated through GISS$^{\rho}$ algorithm (Left: Matrix elementwise $l_\infty$ norm; Middle: Operator norm; Right: Frobenius norm). We check the case in Model 2 when $p=400,~\rho=1$ and repeat the test for 20 times. The stopping criterion $\lambda_n$ is set as 0.05 and the sample number varies from 7000 to 10000. }
\label{Error_fig}
\end{figure}


 For Case 3 where the real fMRI dataset for ADHD is used, we compare the performance of support recovery using the data from all subjects, and the results suggest that GISS$^{\rho}$ has competitive performance with CLIME.
 Figure \ref{ADHD_GISS_fig} compares the running time of GISS$^{\rho}$ with different $\rho$, HTP algorithm, and $\text{ADMM}_{\lambda_n}$ algorithm. Similar to the procedure described before, for each subject, we use the each algorithm to recover a designated edge percentages: 10\%, 20\%, 30\%, 40\%, 50\%. This plot shows that the running time of GISS$^{\rho}$ grows quickly for $\rho=1$ when we need to recover more connections. However, when we change the value by taking $\rho$=2, we find both HTP and GISS$^{\rho}$ ($\rho=2$) shows the similar performance. $\text{ADMM}_{\lambda_n}$ algorithm is the slowest among above three methods. We note that we omit the application of ADMM method to this dataset as the computation time performance is much worse than any other algorithms in Figure \ref{ADHD_GISS_fig}.
\begin{figure}[htbp]
  \centering
   \subfigure{\includegraphics[width=0.85\textwidth]{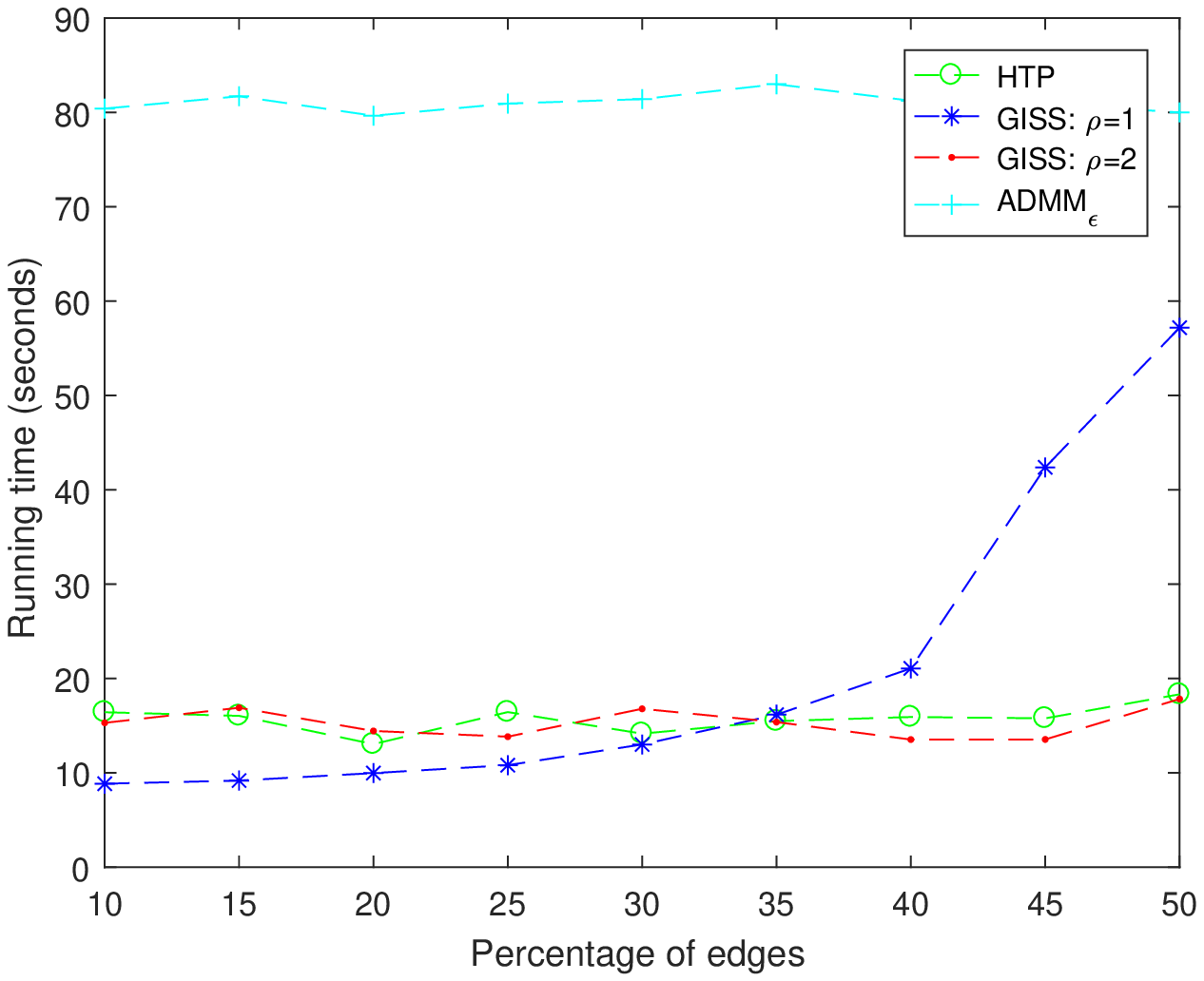}}
  \caption{Comparison of running time for the ADHD dataset.}
   \label{ADHD_GISS_fig}
\end{figure}
\section{Discussion and Conclusion}
\label{sec:dc}
This paper proposed a fast sparse algorithm GISS$^{\rho}$ for estimating precision matrix, especially for the high dimensional problem setting. The GISS$^{\rho}$ is a algorithm based on the $l_1$ minimization. However, it combines the advantage of the greedy algorithms. The asymptotic convergence and the sparse reovery properties of this algorithm are analyzed. Numerical simulations show that the proposed GISS$^{\rho}$ estimator shows a good performance compared to HTP, ADMM and other $l_1$ based estimators, especially for high dimensional problems.

\section*{Appendix}

\begin{lemma}\label{lem: 1}
  Suppose that $\Omega_0\in U(q,s_0(p))$ and $\hat{\Omega}_{n,\gamma}$ is the solution solved by GISS$^{\rho}$ for  $\rho\geq1$ and $\hat{\Sigma}_{n,\gamma}$  defined  as (\ref{eq:sigma_regu}) and symmetry operation defined in (\ref{sym}). If $\|\Omega_0\|_{L_1}(\mathrm{max}_{ij}|\hat{\sigma}_{ij}-\sigma_{ij}^0|+\gamma)\leq\lambda_n\leq\frac{1}{p}$, then we have
  \begin{align}
    &|\hat{\Omega}_{n, \gamma}-\Omega_0|_\infty\leq\left(3+{\frac{1+3p\lambda_n}{1-p\lambda_n}}\right)\lambda_nM,\label{infty_ineq}
  \end{align}
  where $C_1\leq2(1+2^{1-q}+3^{1-q})\left(3M+{\frac{1+3p\lambda_n}{1-p\lambda_n}M}\right)^{1-q}$.
\end{lemma}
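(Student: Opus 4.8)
The plan is to adapt the CLIME perturbation argument of \cite{CLIME} to the three features that distinguish the GISS$^{\rho}$ estimator: it comes from the \emph{equality}-constrained program $\hat{\Sigma}_{n,\gamma}\bm{\beta_i}=\bm{e_i}$ run with an \emph{early-stopping} rule, it uses the ridge-regularized covariance $\hat{\Sigma}_{n,\gamma}=\Sigma_n+\gamma I$, and its columns are symmetrized through (\ref{sym}). Write $\hat{\Omega}_1=[\hat{\bm{\beta}}_1,\dots,\hat{\bm{\beta}}_p]$ for the raw (pre-symmetrization) output, $\Delta:=\hat{\Sigma}_{n,\gamma}-\Sigma_0=(\Sigma_n-\Sigma_0)+\gamma I$, and $\delta_n:=\max_{ij}|\hat{\sigma}_{ij}-\sigma^0_{ij}|+\gamma$, so that the hypothesis reads $\|\Omega_0\|_{L_1}\delta_n\le\lambda_n\le 1/p$. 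I will repeatedly use $\|\Omega_0\|_\infty=\|\Omega_0\|_{L_1}\le M$ (symmetry of $\Omega_0$ together with the definition of $U$), submultiplicativity of the row-sum norm $\|\cdot\|_\infty$ and the column-sum norm $\|\cdot\|_{L_1}$, and the two elementary mixed inequalities $|\bm{A}\bm{B}|_\infty\le\|\bm{A}\|_\infty|\bm{B}|_\infty$ and $|\bm{A}\bm{B}|_\infty\le|\bm{A}|_\infty\|\bm{B}\|_{L_1}$.

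The first step is to record two ``perturbed identity'' facts. From the stopping rule of Algorithm~\ref{GISSrho} run with $\hat{\Sigma}_{n,\gamma}$, the residual $\hat{R}:=\hat{\Sigma}_{n,\gamma}\hat{\Omega}_1-I$ has $|\hat{R}|_\infty\le\lambda_n$; and from $\Sigma_0\Omega_0=I$ the matrix $F:=\Omega_0\hat{\Sigma}_{n,\gamma}-I=\Omega_0\Delta$ has $|F|_\infty\le\|\Omega_0\|_{L_1}\delta_n\le\lambda_n$, with the same bound for $F^{T}=\Delta\Omega_0$ since $\Delta$ is symmetric. Because $\lambda_n\le1/p$ we get $\|F\|_\infty,\|F^{T}\|_\infty\le p\lambda_n\le1$, so (assuming $p\lambda_n<1$; the asserted inequality is vacuous when $p\lambda_n=1$) $I+F$ is invertible, $\|(I+F)^{-1}\|_\infty\le(1-p\lambda_n)^{-1}$, and by transposition $\|(I+F)^{-1}\|_{L_1}=\|(I+F^{T})^{-1}\|_\infty\le(1-p\lambda_n)^{-1}$. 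In particular $\hat{\Sigma}_{n,\gamma}$ is invertible with $\hat{\Sigma}_{n,\gamma}^{-1}=(I+F)^{-1}\Omega_0$ and $\|\hat{\Sigma}_{n,\gamma}^{-1}\|_{L_1}\le M/(1-p\lambda_n)$, which then controls the raw iterate: from $\hat{\Omega}_1=\hat{\Sigma}_{n,\gamma}^{-1}(I+\hat{R})$ and $\|I+\hat{R}\|_{L_1}\le 1+p\lambda_n$,
\[
\|\hat{\Omega}_1\|_{L_1}\le\|\hat{\Sigma}_{n,\gamma}^{-1}\|_{L_1}\,\|I+\hat{R}\|_{L_1}\le\frac{(1+p\lambda_n)M}{1-p\lambda_n}\le\frac{(1+3p\lambda_n)M}{1-p\lambda_n}.
\]

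With these in hand the main elementwise estimate is immediate: $\Omega_0\hat{\Sigma}_{n,\gamma}\hat{\Omega}_1$ equals both $\Omega_0(I+\hat{R})$ and $(I+F)\hat{\Omega}_1$, whence $\hat{\Omega}_1-\Omega_0=\Omega_0\hat{R}-F\hat{\Omega}_1$ and
\[
|\hat{\Omega}_1-\Omega_0|_\infty\le\|\Omega_0\|_\infty|\hat{R}|_\infty+|F|_\infty\|\hat{\Omega}_1\|_{L_1}\le M\lambda_n+\lambda_n\,\frac{(1+p\lambda_n)M}{1-p\lambda_n}=\frac{2(1+p\lambda_n)}{1-p\lambda_n}\lambda_nM.
\]
The symmetrization (\ref{sym}) does not increase this error: $\Omega_0$ symmetric gives $\omega^0_{ij}=\omega^0_{ji}$, and $\hat{\omega}_{ij}$ equals one of $\omega_{ij},\omega_{ji}$, each within $|\hat{\Omega}_1-\Omega_0|_\infty$ of $\omega^0_{ij}$, so $|\hat{\Omega}_{n,\gamma}-\Omega_0|_\infty\le|\hat{\Omega}_1-\Omega_0|_\infty$. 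Since $p\lambda_n\le1$ we have $\tfrac{2(1+p\lambda_n)}{1-p\lambda_n}\le\tfrac{4}{1-p\lambda_n}=3+\tfrac{1+3p\lambda_n}{1-p\lambda_n}$, which gives (\ref{infty_ineq}); the (slightly loose) form displayed in the statement is the one convenient for the later theorems. The constant $C_1$ in the statement is then merely recorded for use there: it is $2(1+2^{1-q}+3^{1-q})$ times $\big(3M+\tfrac{1+3p\lambda_n}{1-p\lambda_n}M\big)^{1-q}$, the $(1-q)$-th power arising when one converts this elementwise bound into the matrix $L_1$/Frobenius bounds over $U(q,s_0(p))$ by splitting the entries of $\Omega_0$ at thresholds $\lambda_n,2\lambda_n,3\lambda_n$.

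The one genuinely non-routine point is the $L_1$-norm bound on $\hat{\Omega}_1$ and the accompanying a priori invertibility of $\hat{\Sigma}_{n,\gamma}$ with a controlled inverse: unlike CLIME's program, GISS$^{\rho}$ does not minimize $\|\cdot\|_1$ over an $\ell_\infty$-ball, so the optimality bound $\|\hat{\Omega}_1\|_{L_1}\le\|\Omega_0\|_{L_1}$ is unavailable and must be replaced by the Neumann-series estimate for $(I+F)^{-1}$ — this is exactly where the factor $(1-p\lambda_n)^{-1}$, absent in CLIME, is forced by the hypothesis $\lambda_n\le1/p$. A secondary subtlety, harmless here, is that $\Delta$ bundles the random sampling error with the deterministic ridge $\gamma I$; since $\gamma I$ is symmetric and touches only the diagonal, $\max_{ij}|\Delta_{ij}|\le\max_{ij}|\hat{\sigma}_{ij}-\sigma^0_{ij}|+\gamma=\delta_n$, precisely the quantity the hypothesis bounds. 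Finally, $\rho\ge1$ enters only through the fact that the while-loop terminates with $|\hat{\Sigma}_{n,\gamma}\hat{\Omega}_1-I|_\infty\le\lambda_n$; the resulting bound is otherwise $\rho$-independent.
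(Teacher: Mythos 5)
Your proof is correct (up to one harmless slip noted below) but it reaches the key intermediate bound by a genuinely different route than the paper. Both arguments share the same skeleton: reduce the elementwise error to the sum of a term controlled by the stopping rule and a term of the form $\lambda_n\|\hat{\Omega}_1\|_{L_1}$, then bound the column-sum norm of the raw iterate, then observe that the symmetrization (\ref{sym}) cannot increase the error. The divergence is in how $\|\hat{\Omega}_1\|_{L_1}$ is controlled. The paper works column by column with the chain $|\Sigma_0(\hat{\bm\beta}-\bm\beta)|_\infty\le 3\lambda_n+\lambda_n|\bm\beta|_1/\|\Omega_0\|_{L_1}$ and then invokes the convexity inequality $|\hat{\bm\beta}|_1-|\bm\beta|_1-\langle\tilde{\bm p},\hat{\bm\beta}-\bm\beta\rangle\ge 0$ for the dual variable $\tilde{\bm p}\in\partial|\bm\beta|_1$ maintained by the ISS iteration, which yields $\|\hat{\Omega}_1\|_{L_1}\le\frac{1+3p\lambda_n}{1-p\lambda_n}\|\Omega_0\|_{L_1}$; this is where the algorithm's Bregman/subgradient structure enters and is the natural surrogate for CLIME's lost optimality bound $\|\hat{\Omega}_1\|_{L_1}\le\|\Omega_0\|_{L_1}$. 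You instead prove a priori invertibility of $\hat{\Sigma}_{n,\gamma}$ via the Neumann series for $I+F$ with $F=\Omega_0\Delta$, write $\hat{\Omega}_1=\hat{\Sigma}_{n,\gamma}^{-1}(I+\hat R)$, and get $\|\hat{\Omega}_1\|_{L_1}\le\frac{(1+p\lambda_n)M}{1-p\lambda_n}$. Your route uses nothing about the $\ell_1$ structure of the GISS$^\rho$ output beyond the residual stopping criterion, so it applies verbatim to any estimator satisfying $|\hat{\Sigma}_{n,\gamma}\hat{\Omega}_1-I|_\infty\le\lambda_n$, and it delivers the slightly sharper constant $\frac{2}{1-p\lambda_n}$ versus the paper's $\frac{4}{1-p\lambda_n}=3+\frac{1+3p\lambda_n}{1-p\lambda_n}$; the price is the extra (here automatically satisfied) requirement $p\lambda_n<1$ for invertibility, whereas the paper's subgradient argument only uses $p\lambda_n<1$ to solve for $|\bm\beta|_1$. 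One small correction: in your display the final ``$=$'' should be ``$\le$'', since $M\lambda_n+\lambda_n\frac{(1+p\lambda_n)M}{1-p\lambda_n}$ actually equals $\frac{2\lambda_nM}{1-p\lambda_n}$, which is $\le\frac{2(1+p\lambda_n)}{1-p\lambda_n}\lambda_nM$; the subsequent comparison with the stated bound is unaffected.
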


\begin{proof}
According to the proof of Theorem 6 in \cite{CLIME}, we denote $\hat{\Omega}_{n,\gamma}:=\hat{\Omega}_{\gamma}$. Let $\hat{\bm{\beta}}$ be the one column of $\Omega_0$ and the $\bm{\beta}$ be the final result of GISS$^{\rho}$ estimator satisfying $|\Sigma_{n,\gamma}\bm{\beta}-\bm{e}|_\infty\leq\lambda_n$, then we have
  \begin{subequations}
  \begin{align}
    |\Sigma_0(\hat{\bm{\beta}}-\bm{\beta})|_\infty&\leq|\Sigma_{n,\gamma}(\hat{\bm{\beta}}-\bm{\beta})|_\infty+|(\Sigma_{n,\gamma}-\Sigma_0)(\hat{\bm{\beta}}-\bm{\beta})|_\infty \\
    &\leq|\Sigma_{n,\gamma}\hat{\bm{\beta}}-e|_\infty+|\Sigma_{n,\gamma}\bm{\beta}-e|_\infty+|\Sigma_{n,\gamma}-\Sigma_0|_\infty|\hat{\bm{\beta}}-\bm{\beta}|_1 \\
    &\leq|\Sigma_{n,\gamma}\hat{\bm{\beta}}-e|_\infty+\lambda_n+|\Sigma_{n,\gamma}-\Sigma_0|_\infty|\hat{\bm{\beta}}-\bm{\beta}|_1\\
    &\leq|(\Sigma_{n,\gamma}-\Sigma_0)\hat{\bm{\beta}}|_\infty+\lambda_n+|\Sigma_{n,\gamma}-\Sigma_0|_\infty|\hat{\bm{\beta}}-\bm{\beta}|_1 \\
    &\leq|\Sigma_{n,\gamma}-\Sigma_0|_\infty|\hat{\bm{\beta}}|_1+\lambda_n+|\Sigma_{n,\gamma}-\Sigma_0|_\infty|\hat{\bm{\beta}}-\bm{\beta}|_1 \\
    &\leq2\lambda_n+|\Sigma_{n,\gamma}-\Sigma_0|_\infty|\hat{\bm{\beta}}-\bm{\beta}|_1 \\
    &\leq2\lambda_n+|\Sigma_{n,\gamma}-\Sigma_0|_\infty(|\hat{\bm{\beta}}|_1+|\bm{\beta}|_1)\\
    &\leq3\lambda_n+\frac{\lambda_n|\bm{\beta}|_1}{\|\Omega_0\|_{L_1}}\label{res}
    \end{align}
  \end{subequations}
Transform the result (\ref{res}) into matrix form and let $\hat{\Omega}^1_\gamma$ be the matrix form of $\beta$, it follows that
\begin{equation}
  |\hat{\Omega}^1_\gamma-\Omega_0|_\infty\leq\|\Omega_0\|_{L_1}|\Sigma_0(\hat{\Omega}^1_\gamma-\Omega_0)|_\infty\leq3\lambda_n\|\Omega_0\|_{L_1}+{\lambda_n\|\hat{\Omega}^1_\gamma\|_{L_1}}, \label{Lemma1_im}
\end{equation}
since
\begin{equation}
  |AB|_\infty\leq\|A\|_{L_1}|B|_\infty,
\end{equation}
when $A$ is symmetric matrix. \\
\textcolor{black}{Let $\tilde{\bm{p}}$ be the subgradient of $\|\cdot\|_1$ at $\bm{\beta}$, then
\begin{subequations}
\begin{align}
  |\langle\tilde{\bm{p}},\hat{\bm{\beta}}-\bm{\beta}\rangle|&\leq|\tilde{\bm{p}}|_1|\hat{\bm{\beta}}-\bm{\beta}|_\infty \\
  &\leq p|\Omega_0\Sigma_0(\hat{\bm{\beta}}-\bm{\beta})|_\infty\\
  &\leq p\|\Omega_0\|_{\infty}|\Sigma_0(\hat{\bm{\beta}}-\bm{\beta})|_\infty  \\
  &\textcolor{black}{=p\|\Omega_0\|_{L_1}|\Sigma_0(\hat{\bm{\beta}}-\bm{\beta})|_\infty}  \\
  &\leq p\|\Omega_0\|_{L_1}\left(3\lambda_n+\frac{\lambda_n|\bm{\beta}|_1}{\|\Omega_0\|_{L_1}}\right)
  \end{align}
\end{subequations}
and since
\begin{equation}
  |\hat{\bm{\beta}}|_1-|\bm{\beta}|_1-\langle\tilde{\bm{p}},\hat{\bm{\beta}}-\bm{\beta}\rangle\geq0,
\end{equation}
we have
\begin{subequations}
\begin{align}
  |{\bm{\beta}}|_1 &\leq |\hat{\bm{\beta}}|_1-\langle\tilde{p},\hat{\bm{\beta}}-\bm{\beta}\rangle\\
  &\leq|\hat{\bm{\beta}}|_1+p\|\Omega_0\|_{L_1}\left(3\lambda_n+\frac{\lambda_n|\bm{\beta}|_1}{\|\Omega_0\|_{L_1}}\right)
  \end{align}
\end{subequations}
In the matrix form, we have
\begin{equation}
  \textcolor{black}{\|\hat{\Omega}^1_\gamma\|_{L_1}\leq\|\Omega_0\|_{L_1}+3p\|\Omega_0\|_{L_1}\lambda_n+\lambda_n p\|\hat{\Omega}^1_\gamma\|_{L_1}}.
\end{equation}
Therefore,
\begin{equation}
  \textcolor{black}{\|\hat{\Omega}^1_\gamma\|_{L_1}\leq\left(\frac{1+3p\lambda_n}{1-p\lambda_n}\right)\|\Omega_0\|_{L_1} \label{relation_L1}}
\end{equation}
Combining (\ref{relation_L1}) and (\ref{Lemma1_im}), we have
\begin{equation}
  \textcolor{black}{|\hat{\Omega}^1_\gamma-\Omega_0|_\infty\leq\|\Omega_0\|_{L_1}|\Sigma_0(\hat{\Omega}^1_\gamma-\Omega_0)|_\infty\leq3\lambda_n\|\Omega_0\|_{L_1}+{\lambda_n\left(\frac{1+3p\lambda_n}{1-p\lambda_n}\right)\|\Omega_0\|_{L_1}}}. \label{Lemma1_im2}
\end{equation}
Following the symmetric operation of $\|\hat{\Omega}_\gamma\|_{L_1}$ introduced in (\ref{sym}), we have
\begin{equation}
  \textcolor{black}{|\hat{\Omega}_\gamma-\Omega_0|_\infty\leq|\hat{\Omega}^1_\gamma-\Omega_0|_\infty}.
\end{equation}
This establishes (\ref{infty_ineq}) with (\ref{Lemma1_im2})}.
\end{proof}

Combining the proof of Theorem 1(a) and 1(b) and Theorem 4(a), 4(b) in \cite{CLIME}, Theorem \ref{Convergence-Th} can be derived  with Lemma \ref{lem: 1}. Similarly, Theorem \ref{Convergence-Th(C3)} can be derived based on Theorem 2 and Theorem 5 in \cite{CLIME} and Lemma \ref{lem: 1}.

In the following, we prove the sparse recovery property Theorem  \ref{Th1_gaussian}.
\begin{proof}
We first prove the first statement in Theorem \ref{Th1_gaussian}.  Following the proof of Theorem 6.1 and 6.2 in \cite{osher2016sparse}, the proof  is divided into two parts. The first part shows that GISS$^{\rho}$ will stop at the chosen stopping criteria with high probability and the second part is to assure the process will not stop if there still have supports to choose.

  Let $b_\infty$ be the stopping point in the form of $l_{\infty}$ norm i.e. $b_\infty=\epsilon\sqrt{1+2\sqrt{\frac{\mathrm{log}p}{p}}}$. ${P}_t=\hat{\Sigma}_{S(t)}\left(\hat{\Sigma}^*_{S(t)}\hat{\Sigma}_{S(t)}\right)^{-1}\hat{\Sigma}_{S(t)}^*$. Obviously, $P_t$ is a symmetric projection matrix. It follows from Lemma 4 in \cite{cai2002block}, we have
    \begin{subequations}
    \begin{align}
      \mathrm{Prob}(|\bm{r}(t)|_\infty&=|(\textbf{I}-{P}_t)\bm{\epsilon}|_\infty\geq b_\infty)\leq\mathrm{Prob}(|\bm{\epsilon}|_2\geq b_\infty) \\
      &\leq\frac{1}{1-\frac{1}{p}\left(1+2\sqrt{\frac{\mathrm{log}p}{p}}\right)}\mathrm{exp}\left(-\frac{p}{2}\left(1-\frac{1}{p}\left(1+2\sqrt{\frac{\mathrm{log}p}{p}}\right)
      -\mathrm{log}\left(2-\frac{1}{p}\left(1+2\sqrt{\frac{\mathrm{log}p}{p}}\right)\right)\right)\right) \\
      &=\frac{\mathrm{exp}{\left(-\frac{p}{2}+\frac{1}{2}\left(1+2\sqrt{\frac{\mathrm{log}p}{p}}\right)+\frac{p}{2}\mathrm{log}\left(2-\frac{1}{p}\left(1+2\sqrt{\frac{\mathrm{log}p}{p}}\right)\right)\right)}}{1-\frac{1}{p}\left(1+2\sqrt{\frac{\mathrm{log}p}{p}}\right)}
    \end{align}
    \end{subequations}
    which means the algorithm stops when the residual is less than $b_\infty$. \\
    The following part is to show that the algorithm will not stop whenever there exits the $i\in S$ such that $\bm{\beta}_i^*(t)=0$. Similar to the proof of the Theorem 6.1 in \cite{osher2016sparse}, we have
        \begin{subequations}
    \begin{eqnarray}
    |\bm{r}(t)|_\infty &=& |\hat{\Sigma}_S(\bm{\bm{\beta}}^*-\bm{\beta}_S(t))+(I-P_S)\epsilon|_\infty \\
    &\geq&|\hat{\Sigma}_S(\bm{\beta}^*-\bm{\beta}_S(t))|_\infty-|(I-P_S)\epsilon|_\infty \\
    &\geq&\sqrt{\theta}|\bm{\beta}^*-\bm{\beta}_S(t)|_2-b_\infty \\
    &\geq&\sqrt{\theta}\bm{\beta}^*_{\mathrm{min}}-b_\infty \\
    &\geq&b_\infty
    \end{eqnarray}
        \end{subequations}
    provided that ${\bm{\beta}}^*_{\mathrm{min}}\geq\frac{2\epsilon}{\sqrt{\theta}}\left(\sqrt{1+2\sqrt{\frac{\mathrm{log}p}{p}}}\right)$. Since
    \begin{equation}
      |(\hat{\Sigma}_S^*\hat{\Sigma}_S)^{-1}\hat{\Sigma}_S^*\epsilon|_\infty\leq2\epsilon\sqrt{\frac{\mathrm{log}s}{p\theta}},\ \ \mathrm{w}.\mathrm{p}.~\ \mathrm{at}\ \mathrm{least}\ 1-2p^{-1},
    \end{equation}
    so it suffices to have
    \begin{equation}
     \bm{\beta}^*_{\mathrm{min}}\geq\frac{2\epsilon}{\sqrt{\theta}}\left(\sqrt{1+2\sqrt{\frac{\mathrm{log}p}{p}}}+\sqrt{\frac{\mathrm{log}s}{p}}\right).
   \end{equation}
\end{proof}

The second statement of  Theorem \ref{Th1_gaussian} can be similarly proved.

%

\end{document}